\numberwithin{equation}{section}
\theoremstyle{plain}
\newtheorem{theorem}[subsection]{Theorem}
\newtheorem{lemma}[subsection]{Lemma}
\newtheorem{corollary}[subsection]{Corollary}
\newtheorem{proposition}[subsection]{Proposition}
\theoremstyle{definition}
\newtheorem{definition}[subsection]{Definition}
\theoremstyle{remark}
\newtheorem{remark}[subsection]{Remark}
\newcommand{\ind}{\operatorname{ind}}
\newcommand{\id}{\operatorname{id}}
\newcommand{\dom}{\operatorname{dom}}
\newcommand{\dm}{\partial M}
\newcommand{\RR}{\mathbb{R}}
\newcommand{\ZZ}{\mathbb{Z}}
\newcommand{\AAA}{\mathcal{A}}
\newcommand{\DD}{\mathcal{D}}
\newcommand{\bfu}{\mathbf{u}}
\newcommand{\upper}{\uppercase\expandafter}
\newcommand{\n}{\nabla}
\newcommand{\p}{\partial}
\newcommand{\pM}{{\p M}}
\newcommand{\Hom}{\operatorname{Hom}}
\newcommand{\End}{\operatorname{End}}
\newcommand{\ad}{{\rm ad}}
\newcommand{\oB}{\bar{B}}
\newcommand{\spf}{\operatorname{sf}}
\newcommand{\PP}{\mathbb{P}}
\renewcommand{\AA}{\mathbb{A}}
\newcommand{\tilM}{\tilde M}
\newcommand{\AS}{\alpha_{\rm AS}}
\newcommand{\oeta}{\bar{\eta}}
\begin{document}

\author{Maxim Braverman${}^\dag$}
\address{Department of Mathematics,
Northeastern University,
Boston, MA 02115,
USA}

\email{maximbraverman@neu.edu}
\urladdr{www.math.neu.edu/~braverman/}

\author{Pengshuai Shi}
\address{Beijing International Center for Mathematical Research (BICMR),
Beijing (Peking) University,
Beijing 100871,
China}

\email{pengshuai.shi@gmail.com}

\subjclass[2010]{58J28, 58J30, 58J32, 19K56}
\keywords{Callias, Atiyah--Patodi--Singer, index, eta, boundary value problem, relative eta}
\thanks{${}^\dag$†Partially supported by the Simons Foundation collaboration grant \#G00005104.}

\title[APS index with non-compact boundary]{An APS index theorem for even-dimensional manifolds with non-compact boundary}

%\papersize{21.0cm}{29.7cm}
\begin{abstract}
We study the index of  the APS boundary value problem for a strongly Callias-type operator $\DD$ on a complete Riemannian manifold $M$. We use this index to define  
the relative $\eta$-invariant $\eta(\AAA_1,\AAA_0)$ of two strongly Callias-type operators, which are equal outside of a compact set. Even though in our situation the $\eta$-invariants of $\AAA_1$ and $\AAA_0$ are not defined, the relative $\eta$-invariant behaves as if it were the difference $\eta(\AAA_1)-\eta(\AAA_0)$. We also define the spectral flow of a family of such operators and use it to compute the variation of the relative $\eta$-invariant. 
\end{abstract}

\maketitle

\setcounter{tocdepth}{1}
%\tableofcontents

%%%%%%%%%%%%%%%
\section{Introduction}\label{S:intro}

In \cite{BrShi17} we studied the index of the Atiyah--Patodi--Singer (APS) boundary value problem for a strongly  Callias-type operator on a complete {\em odd-dimensional} manifold with non-compact boundary. We used this index to define the {\em relative $\eta$-invariant} $\eta(\AAA_1,\AAA_0)$ of two strongly Callias-type operators  on {\em even-dimensional} manifolds, assuming that $\AAA_0$ and $\AAA_1$ coincide outside of a compact set. 

In this paper we discuss an even-dimensional analogue of \cite{BrShi17}. Many parts of the paper are parallel to the discussion in \cite{BrShi17}. However, there are two important differences. First, the Atiyah--Singer integrand was, of course, equal to 0 in \cite{BrShi17}, which simplified many formulas. In particular, the relative $\eta$-invariant was an integer. As opposed to it,  in the current paper the Atiyah--Singer integrand plays an important role and the relative $\eta$-invariant is a real number. More significantly, the proof of the main result in \cite{BrShi17} was based on the application of the Callias index theorem, \cite{Anghel93Callias,Bunke95}. This theorem is not available in our current setting. Consequently, a completely different proof is proposed in Section~\ref{S:esscyl}.

Our results provide a new tool to study anomalies in quantum field theory. Mathematical description of many anomalies is given by index theorems for boundary value problems, cf. \cite{Witten85GravAnom,AtSinger84,Freed86,BarStrohmaier16}, \cite[Ch.~11]{Bertlmann01}. However, most mathematically rigorous descriptions of anomalies in the literature only work on compact manifolds. The results of the current paper allow to extend many of these descriptions to non-compact setting, thus providing a mathematically rigorous description of anomalies in more realistic physical situations. In particular, B\"ar and Strohmaier, in \cite{BarStrohmaier15,BarStrohmaier16}, gave a mathematically rigorous description of chiral anomaly  by considering an APS boundary problem for Dirac operator on a Lorentzian spatially compact manifold. In a recent preprint \cite{Br-Lorentz} the first author extended the results of \cite{BarStrohmaier15} to spatially non-compact case. The analysis in \cite{Br-Lorentz} depends heavily on the results of the current paper. Another applications to the anomaly considered in \cite{HoravaWitten96} will appear in \cite{BrShi18}.

We now briefly describe our main results.

%---------------------------------------------
\subsection{Strongly Callias-type operators}\label{SS:IstrCallias}

A Callias-type operator on a complete Riemannian manifold $M$ is an operator of the form $\DD= D+\Psi$ where $D$ is a Dirac operator and $\Psi$ is a self-adjoint potential which anticommutes with the Clifford multiplication and satisfies certain growth conditions at infinity. In this paper we impose  slightly stronger growth conditions on $\Psi$ and refer to the obtained operator $\DD$ as a {\em strongly Callias-type operator}. Our conditions on the growth of $\Psi$ guarantee that the spectrum of $\DD$ is discrete.

The Callias-type index theorem, proven in different forms in  \cite{Callias78,BottSeeley78,BruningMoscovici,Anghel93Callias,Bunke95}, computes the index of a Callias-type operator on a complete {\em odd-dimensional} manifold as the index of a certain operator induced by $\DD$ on a compact hypersurface. Several generalizations and applications of the Callias-type index theorem were obtained recently in \cite{Kottke11,CarvalhoNistor14,Wimmer14,Kottke15,BrShi16,BrCecchini17}. 

P.~Shi, \cite{Shi17}, proved a version of the Callias-type index theorem for the  APS  boundary value problem for Callias-type operators on a complete odd-dimensional manifold with compact boundary. 

Fox and Haskell \cite{FoxHaskell03,FoxHaskell05} studied Callias-type operators on manifolds with non-compact boundary.  Under rather strong conditions on the geometry of the manifold and the operator $\DD$ they proved a version of the Atiyah--Patodi--Singer index theorem. 

In \cite{BrShi17} we studied  the index of the APS boundary value problem on an arbitrary complete odd-dimensional manifold with non-compact boundary. In the current paper, we obtain an even-dimensional analogue of \cite{BrShi17}.

%---------------------------------------------
\subsection{An almost compact essential support}\label{SS:Iesssupport}

A manifold $C$, whose boundary is a disjoint union of two complete manifolds $N_0$ and $N_1$, is called {\em essentially cylindrical} if outside of a compact set it is isometric to a cylinder $[0,\varepsilon]\times N$, where $N$ is a non-compact manifold. It follows that manifolds $N_0$ and $N_1$ are isometric outside of a compact set. 

We say that an essentially cylindrical manifold $M_1$, which contains $\pM$,  is an {\em almost compact essential support of\/ $\DD$}\/ if the restriction of $\DD^*\DD$ to $M\setminus M_1$ is strictly positive and the restriction of $\DD$ to the cylinder $[0,\varepsilon]\times N$ is a product, cf. Definition~\ref{D:almost compact support}. Every strongly Callias-type operator on $M$ which is a product near $\pM$ has an almost compact essential support. 

Theorem~\ref{T:indM=indM1} states that {\em the index of the APS boundary value problem for a strongly Callias-type operator $\DD$ on a complete manifold $M$ is equal to the index of the APS boundary value problem of the restriction of $\DD$ to its almost compact essential support $M_1$}.

%---------------------------------------------
\subsection{Index on an essentially cylindrical manifold}\label{SS:Iesscyl}

Let $M$ be an essentially cylindrical manifold and let $\DD$ be a strongly Callias-type operator on $M$, whose restriction to the cylinder $[0,\varepsilon]\times \p M$ is a product. Suppose $\pM= N_0\sqcup N_1$ and denote the restrictions of $\DD$ to $N_0$ and $N_1$ by $\AAA_0$ and $-\AAA_1$ respectively (the sign convention means that we think  of  $N_0$ as the ``left boundary'' and of $N_1$ as the ``right boundary'' of $M$). Let $\DD_B$ denote the operator $\DD$ with APS boundary conditions. 

Let $\AS(\DD)$ denote the Atiyah--Singer integrand of $\DD$. This is a differential form on $M$ which depends on the geometry of the manifold and the bundle. Since all structures are product outside of the compact set $K$, this form vanishes outside of $K$. Hence, $\int_M\AS(\DD)$ is well-defined and finite. Our main result here is that 
\begin{equation}\label{E:Iess compact}
	\ind \DD_B\ - \ \int_M\,\AS(\DD)
\end{equation}
{\em  depends only on the operators $\AAA_0$ and $\AAA_1$} and does  not depend  on the interior of the manifold $M$ and the restriction of\/ $\DD$ to the interior  of $M$, cf. Theorem~\ref{T:indep of D}.

%---------------------------------------------
\subsection{The relative $\eta$-invariant}\label{SS:Ireleta}
Suppose now that $\AAA_0$ and $\AAA_1$ are self-adjoint strongly Callias-type operators on complete manifolds $N_0$ and $N_1$. An {\em almost compact cobordism} between $\AAA_0$ and $\AAA_1$ is a pair $(M,\DD)$ where $M$ is an essentially cylindrical manifold with $\pM=N_0\sqcup N_1$ and $\DD$ is a strongly Callias-type operator on $M$, whose restriction to the cylindrical part of $M$ is a product and such that the restrictions of $\DD$ to $N_0$ and $N_1$ are equal to $\AAA_0$ and $-\AAA_1$ respectively. We say that $\AAA_0$ and $\AAA_1$ are {\em cobordant}\/ if there exists an almost compact cobordism between them. In particular, this implies that  $\AAA_0$ and $\AAA_1$ are equal outside of a compact set. 

Let  $\DD$ be an almost compact cobordism between $\AAA_0$ and $\AAA_1$. Let $B_0$ and $B_1$ be the APS boundary conditions for $\DD$ at $N_0$ and $N_1$ respectively. Let $\ind \DD_{B_0\oplus B_1}$ denote the index of the APS boundary value problem for $\DD$.  We define the {\em relative $\eta$-invariant} by the formula
\begin{equation}\label{E:Ireleta}\notag
	\eta(\AAA_1,\AAA_0) \ = \  
	2\,\left(\,\ind \DD_{B_0\oplus B_1}-\int_M\,\AS(\DD)\,\right)
	 \ + \ \dim\ker \AAA_0\ + \ \dim\ker \AAA_1.
\end{equation}
It follows from the result of the previous subsection that $\eta(\AAA_1,\AAA_0)$ is independent of the choice of an almost compact cobordism.  

If $M$ is a compact manifold, then the Atiayh-Patodi-Singer index theorem \cite{APS1} implies that $\eta(\AAA_1,\AAA_0)= \eta(\AAA_1)-\eta(\AAA_0)$. In general, for non-compact manifolds, the individual $\eta$-invariants $\eta(\AAA_1)$ and $\eta(\AAA_0)$ might not be defined. However, $\eta(\AAA_1,\AAA_0)$  behaves like it were a difference of two individual $\eta$-invariants. In particular, cf. Propositions~\ref{P:antisymmetry eta}--\ref{P:cocycle},  
\[
	\eta(\AAA_1,\AAA_0)\ = \ -\,\eta(\AAA_0,\AAA_1), 
	\qquad
	\eta(\AAA_2,\AAA_0)\ = \ \eta(\AAA_2,\AAA_1)\ + \ \eta(\AAA_1,\AAA_0).
\]

Under rather strong conditions on the manifolds $N_0$ and $N_1$ and on the operators $\AAA_0$, $\AAA_1$,  Fox and Haskell \cite{FoxHaskell03,FoxHaskell05} showed that the  heat kernel of $\AAA_j$ ($j=0,1$) has a nice asymptotic expansion similar to the one  for operators on compact manifolds. Then they were able to define the  individual $\eta$-invariants $\eta(\AAA_j)$ ($j=0,1$). In this situation, as expected, our relative $\eta$-invariant is equal to the difference of the individual $\eta$-invariants:  $\eta(\AAA_1,\AAA_0)= \eta(\AAA_1)-\eta(\AAA_0)$. 

Under much weaker (but still quite strong) assumptions,  M\"uller, \cite{Muller98}, suggested a definition of a relative $\eta$-invariant based on analysis of the relative heat kernel \cite{Donnelly87,Bunke95,Bunke93}. This invariant behaves very similar to our $\eta(\AAA_1,\AAA_0)$. The precise conditions under which these two invariants are equal are not clear yet. We note that our invariant is defined on a much wider class of manifolds, where the relative heat kernel is not of trace class and can not be used to construct an $\eta$-invariant.

%---------------------------------------------
\subsection{The spectral flow}\label{SS:Isp flow}

Consider a family $\AA= \{\AAA^s\}_{0\le s\le 1}$ of self-adjoint strongly Callias-type operators  on a complete Riemannian manifold. We assume that there is a compact set $K\subset M$ such that the restriction of $\AAA^s$ to $M\setminus K$ is independent of $s$. Since the spectrum of $\AAA^s$ is discrete for all $s$, the spectral flow $\spf(\AA)$ can be defined in a more or less usual way. By Theorem~\ref{T:sp flow}, if $\AAA_0$ is a self-adjoint strongly Callias-type operator which is cobordant to $\AAA^0$ (and hence, to all $\AAA^s$), then
\begin{equation}\label{E:Isp flow}
		\eta(\AAA^1,\AAA_0)\ - \ \eta(\AAA^0,\AAA_0)
	\ - \ \int_0^1\,\big(\frac{d}{ds}\oeta(\AAA^s,\AAA_0)\big)\,ds
	\ = \ 2\,\spf(\AA).
\end{equation}
The derivative $\frac{d}{ds}\oeta(\AAA^s,\AAA_0)$ can be computed as an integral of the transgression form --- a differential form canonically constructed from the symbol of $\AAA^s$ and its derivative with respect to $s$. Thus \eqref{E:Isp flow} expresses the change of the relative $\eta$-invariant as a sum of $2\spf(\AA)$ and a local differential geometric expression.

%%%%%%%%%%%%%%%
\section{Boundary value problems for Callias-type operators}\label{S:setting}
In this section we recall some results about boundary value problems for Callias-type  operators on manifolds with non-compact boundary,  \cite{BrShi17}, keeping in mind applications to even-dimensional case. The operators considered here are slightly more general than those discussed in \cite{BrShi17}, but all the definitions and most of the properties of the boundary value problems remain the same.

%----------
\subsection{Self-adjoint strongly Callias-type operators}\label{SS:saCallias}

Let $M$ be a complete Riemannian manifold (possibly with boundary) and let $E\to M$ be a Dirac bundle over $M$, cf. \cite[Definition~II.5.2]{LawMic89}. In particular, $E$ is a Hermitian bundle endowed with a Clifford multiplication $c:T^*M\to \End(E)$ and a compatible Hermitian connection $\n^E$.  Let $D:C^\infty(M,E)\to C^\infty(M,E)$ be the Dirac operator defined by the connection $\n^E$. Let $\Psi\in{\rm End}(E)$ be a self-adjoint bundle map (called a \emph{Callias potential}). Then
\[
	\DD\;:=\;D+\Psi
\]
is a formally self-adjoint Dirac-type operator on $E$ and
\begin{equation}\label{E:Calliaseq}
	\DD^2\;=\;D^2+\Psi^2+[D,\Psi]_+,
\end{equation}
where $[D,\Psi]_+:=D\circ\Psi+\Psi\circ D$ is the anticommutator of the operators $D$ and $\Psi$.

%---------
\begin{definition}\label{D:saCallias}
We call $\DD$ a \emph{self-adjoint strongly Callias-type operator} if
\begin{enumerate}
\item $[D,\Psi]_+$ is a zeroth order differential operator, i.e. a bundle map;
\item for any $R>0$, there exists a compact subset $K_R\subset M$ such that
\begin{equation}\label{E:strinvinfA}
	\Psi^2(x)\;-\;\big|[D,\Psi]_+(x)\big|\;\ge\;R
\end{equation}
for all $x\in M\setminus K_R$.  In this case, the compact set $K_R$ is called an \emph{$R$-essential support} of $\DD$, or an \emph{essential support} when we do not need to stress the associated constant.
\end{enumerate}
\end{definition}

\begin{remark}\label{R:Psi anticommutes}
Condition (i) of Definition~\ref{D:saCallias} is equivalent to the condition that $\Psi$ anticommutes with the Clifford multiplication: $\big[c(\xi),\Psi\big]_+ = 0$, for all $\xi\in T^*M$.
\end{remark}

%----------
\subsection{Graded self-adjoint strongly Callias-type operators}\label{SS:grCallias}

Suppose now that $E=E^+\oplus E^-$ is a $\ZZ_2$-graded Dirac bundle such that the Clifford multiplication $c(\xi)$ is odd and the Clifford connection is even with respect to this grading. Then
\[
D\;:=\;\left(
\begin{matrix}
0 & D^- \\
D^+ & 0 
\end{matrix}
\right)
\]
is the $\ZZ_2$-graded Dirac operator, where
\(
D^\pm\;:\;C^\infty(M,E^\pm)\to C^\infty(M,E^\mp)
\)
are formally adjoint to each other. Assume that the Callias potential $\Psi$ has odd grading degree, i.e., 
\[
\Psi\;=\;\left(
\begin{matrix}
0 & \Psi^- \\
\Psi^+ & 0 
\end{matrix}
\right),
\]
where $\Psi^\pm\in\Hom(E^\pm,E^\mp)$ are adjoint to each other. Then we have
\begin{equation}\label{E:Callias}
	\DD\;=\;D\,+\,\Psi\;=\;\left(
	\begin{matrix}
	0 & D^-+\Psi^- \\
	D^++\Psi^+ & 0
	\end{matrix}
	\right)\;=:\;\left(
	\begin{matrix}
	0 & \DD^- \\
	\DD^+ & 0
	\end{matrix}
	\right).
\end{equation}

%---------
\begin{definition}\label{D:grCallias}
Under the same condition as in Definition \ref{D:saCallias}, $\DD$ is called a \emph{graded self-adjoint strongly Callias-type operator}. In this case, we also call $\DD^+$ and $\DD^-$ strongly Callias-type operators. They are formally adjoint to each other. By an \emph{$R$-essential support} (or \emph{essential support}) of $\DD^\pm$ we understand as an $R$-essential support (or essential support) of $\DD$. 
\end{definition}

%---------
\begin{remark}\label{R:grCallias-even}
When $M$ is an oriented even-dimensional manifold there is a natural grading of $E$ induced by the volume form. We will consider this situation in the next section.
\end{remark}

%------------
\begin{remark}\label{R:grCallias-odd}
Suppose there is a skew-adjoint isomorphism $\gamma:E^\pm\to E^\mp$, $\gamma^*=-\gamma$, which anticommutes with multiplication $c(\xi)$ for all $\xi\in T^*M$, satisfies $\gamma^2=-1$, and is flat with respect to the connection $\n^E$, i.e. $[\n^E,\gamma]=0$. Then $\xi\mapsto \gamma\circ c(\xi)$ defines a Clifford multiplication of $T^*M$ on $E^+$ and the corresponding Dirac operator is $\tilde{D}^+= \gamma\circ D^+$. Suppose also that $\gamma$ commutes with $\Psi$. Then $\Phi^+=-i\gamma\circ\Psi^+$ is a self-adjoint endomorphism of $E^+$. In this situation, 
\[
	\tilde{D}^++i\Phi^+ =  \gamma\circ\DD^+:\, 
	C^\infty(M,E^+)\ \to \ C^\infty(M,E^+)
\] 
is a strongly Callias-type operator in the sense of \cite[Definition 3.4]{BrShi17}.
\end{remark}

%---------------------------------------------
\subsection{Restriction to the boundary}\label{SS:restriction}

Assume that the Riemannian metric $g^M$ is {\em product near the boundary}, that is, there exists a neighborhood $U\subset M$ of the boundary which is isometric to the cylinder
\begin{equation}\label{E:Zr}
	Z_r\ := \ [0,r)\times\dm. 
\end{equation} 
In the following we identify $U$ with $Z_r$ and denote by $t$ the coordinate along the axis of $Z_r$. Then the inward unit normal one-form to the boundary is given by $\tau = dt$. 

Furthermore, we assume that the Dirac bundle $E$ is {\em product near the boundary}. In other words we assume that the Clifford multiplication $c:T^*M\to{\rm End}(E)$ and the connection $\nabla^E$ have product structure on $Z_r$, cf. \cite[\S3.7]{BrShi17}.

Let $D$ be a $\ZZ_2$-graded Dirac operator. In this situation the restriction of $D$ to $Z_r$ takes the form 
\begin{equation}\label{E:productD}
	D\; = \;c(\tau)(\p_t+\hat{A})=
	\left(
	\begin{matrix}
	0 & c(\tau) \\
	c(\tau) & 0
	\end{matrix}
	\right)
	\left(
	\begin{matrix}
	\p_t+A & 0 \\
	0 & \p_t+A^\sharp
	\end{matrix}
	\right), 
\end{equation}
where
\[
	A:\;C^\infty(\dm,E^+_{\dm})\;\to\; C^\infty(\dm,E^+_{\dm})
\]
and 
\begin{equation}\label{E:anticommuting}
	A^\sharp= c(\tau)\circ A\circ c(\tau):\, 
	C^\infty(\dm,E^-_{\dm})\;\to\; C^\infty(\dm,E^-_{\dm})
\end{equation}
are formally self-adjoint operators acting on the restrictions of $E^\pm$ to the boundary.

%-------------------------------------
\begin{remark}\label{R:Asharp}
It would be more natural to use the notation $A^+$ and $A^-$ instead of $A$ and $A^\sharp$. But since in the future we only deal with the operator $A:C^\infty(\dm,E^+_{\dm})\to C^\infty(\dm,E^+_{\dm})$ we remove the superscript ``$+$'' to simplify the notation. 
\end{remark}

Let $\DD=D+\Psi$ be a graded self-adjoint strongly Callias-type operator. Then the restriction of $\DD$ to $Z_r$ is given by 
\begin{equation}\label{E:productDD}
	\DD\; = \;c(\tau)(\p_t+\hat\AAA)\;=\;
	\left(
	\begin{matrix}
	0 & c(\tau) \\
	c(\tau) & 0
	\end{matrix}
	\right)
	\left(
	\begin{matrix}
	\p_t+\AAA & 0 \\
	0 & \p_t+\AAA^\sharp
	\end{matrix}
	\right), 
\end{equation}
where
\begin{equation}\label{E:AAA}
		\AAA \;:= \;A-c(\tau)\Psi^+:\,C^\infty(\dm,E^+_{\dm})
		\;\to \;C^\infty(\dm,E^+_{\dm}).
\end{equation}
and $\AAA^\sharp = A^\sharp-c(\tau)\Psi^-$. 
By Remark \ref{R:Psi anticommutes}, $c(\tau)\Psi^\pm\in{\rm End}(E^\pm_{\dm})$ are self-adjoint bundle maps. Therefore $\AAA$ and $\AAA^\sharp$ are formally self-adjoint operators. In fact, they are strongly Callias-type operators, cf. Lemma~3.12 of \cite{BrShi17}.  In particular, they have discrete spectrum. Also,
\[
	\AAA^\sharp \ = \  c(\tau)\circ\AAA\circ c(\tau).
\]

%-----------------------------------------
\begin{definition}\label{D:producDD}
We say that a graded self-adjoint strongly Callias-type operator $\DD$ is {\em product near the boundary} if the Dirac bundle $E$ is  product near the boundary and the restriction of the Callias potential $\Psi$ to $Z_r$ does not depend on $t$. The operator $\AAA$ (resp. $\AAA^\sharp$)  of \eqref{E:productDD} is called the {\em restriction of $\DD^+$ (resp. $\DD^-$) to the boundary}. 
\end{definition}

%----------
\subsection{Sobolev spaces on the boundary}\label{SS:Sob&specproj}

Consider a graded  self-adjoint  strongly Callias-type operator $\DD:C^\infty(M,E)\to C^\infty(M,E)$, cf. \eqref{E:Callias}. The restriction of $\DD^+$ to the boundary is a self-adjoint strongly Callias-type operator
\[
	\AAA:\,C^\infty(\dm,E_{\dm}^+)\ \to \ C^\infty(\dm,E_{\dm}^+).
\] 
We recall the definition of Sobolev spaces $H^s_\AAA(\pM,E^+_\pM)$ of sections over $\pM$ which depend on the boundary operator $\AAA$, cf. \cite[\S3.13]{BrShi17}.

%-----------------------
\begin{definition}\label{D:Sobolev}
Set
\[
	C_\AAA^\infty(\dm,E_{\dm}^+)	\;:=\;
	\Big\{\,\bfu\in C^\infty(\dm,E_{\dm}^+):\,
	\big\|(\id+\AAA^2)^{s/2}\bfu\big\|_{L^2(\dm,E_\pM)}^2<+\infty\mbox{ for all }s\in\RR\,\Big\}.
\]
For all $s\in\RR$ we define the \emph{Sobolev $H_\AAA^s$-norm} on $C_\AAA^\infty(\dm,E_{\dm}^+)$ by
\begin{equation}\label{E:Sobnorm}
	\|\bfu\|_{H_\AAA^s(\dm,E_\pM)}^2\;:=\;
		\big\|(\id+\AAA^2)^{s/2}\bfu\big\|_{L^2(\dm,E_\pM)}^2.
\end{equation}
The Sobolev space $H_\AAA^s(\dm,E_{\dm}^+)$ is defined to be the completion of $C_\AAA^\infty(\dm,E_{\dm}^+)$ with respect to this norm.
\end{definition}

%-------------------------------------------------------
\subsection{Generalized APS boundary conditions}\label{SS:gAPS}

The eigensections of $\AAA$ belong to $H_\AAA^s(\pM,E_\pM^+)$ for all $s\in \RR$, cf. \cite[\S3.17]{BrShi17}. For $I\subset \RR$ we denote by 
\[
	H_I^s(\AAA)\ \subset \  H_\AAA^s(\dm,E_{\dm}^+)
\] 
the span of the eigensections of $\AAA$ whose eigenvalues belong to $I$.

%-------------
\begin{definition}\label{D:gAPS}
For any $a\in\RR$, the subspace
\begin{equation}\label{E:gAPS}
B\;=\;B(a)\;:=\;H_{(-\infty,a)}^{1/2}(\AAA).
\end{equation}
is called the \emph{the generalized Atiyah--Patodi--Singer boundary conditions} for $\DD^+$. If $a=0$, then the space $B(0)= H_{(-\infty,0)}^{1/2}(\AAA)$ is called the \emph{Atiyah--Patodi--Singer (APS) boundary condition}. 

The spaces $\oB(a) := H_{(-\infty,a]}^{1/2}(\AAA)$  and  $\oB(0) := H_{(-\infty,0]}^{1/2}(\AAA)$ are called the \emph{dual generalized APS boundary conditions} and the {\em dual APS boundary conditions} respectively.

The space $B^\ad=B^\ad(a):=  H_{(-\infty,-a]}^{1/2}(\AAA^\sharp)$ is called the {\em adjoint of the generalized APS boundary condition for $\DD^+$}. One can see that it is a dual generalized APS boundary condition for $\DD^-$.
\end{definition}

%------------
\begin{definition}\label{D:gAPSbvp}
If $B$ is a generalized APS boundary condition for $\DD^+$, we denote by $\DD^+_B$  the operator $\DD^+$ with domain
\[
	\dom\DD^+_B\;:=\;\{u\in \dom \DD^+_{\max}:\,u|_{\dm}\in B\},
\]
where $\dom \DD^+_{\max}$ denotes the domain of the maximal extension of $\DD^+$ (cf. \cite{BrShi17}). 
We refer to $\DD^+_B$  as the \emph{generalized APS boundary value problem} for $\DD^+$. 
\end{definition}

Recall that $\DD^-$ is the formal adjoint of $\DD^+$. It is shown in Example~4.9 of \cite{BrShi17} that the  $L^2$-adjoint of $\DD^+_{B(a)}$  is given by $\DD^-$ with the dual APS boundary condition $B^\ad(a)$:
\begin{equation}\label{E:dual}
	\big(\DD^+_{B(a)}\big)^\ad  
	\ = \ \DD^-_{B^\ad(a)}.
\end{equation}

%-----------------------------------------
\begin{theorem}\label{T:fredholm}
Suppose that a graded strongly Callias-type operator \eqref{E:Callias} is product near $\pM$. Then the operator $\DD^+_B:\,\dom\DD^+_B\to L^2(M,E^-)$ is Fredholm. In particular, it has finite dimensional kernel and cokernel.
\end{theorem}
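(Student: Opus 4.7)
The plan is to follow the odd-dimensional Fredholmness argument in \cite{BrShi17}: establish a coercive \emph{a priori} estimate for $\DD^+_B$ that yields a finite-dimensional kernel and closed range, then apply the same reasoning to the $L^2$-adjoint via \eqref{E:dual} to control the cokernel.

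The coercive estimate is assembled from three regional pieces. Fix a sufficiently large $R$ and an $R$-essential support $K_R\subset M$, enlarged so that $K_R$ contains the compact closure of a neighborhood of $\pM$ inside the cylinder $Z_r$. On the complement $M\setminus K_R$, which lies away from $\pM$, the strongly Callias hypothesis combined with \eqref{E:Calliaseq} gives
\[
    \|\DD^+u\|^2_{L^2(M\setminus K_R)}\ \ge\ \|Du\|^2_{L^2(M\setminus K_R)}\ +\ R\,\|u\|^2_{L^2(M\setminus K_R)},
\]
which, together with local elliptic regularity of $D$, yields an $H^1$-bound at infinity. On the cylindrical collar $Z_r = [0,r)\times \pM$ the product form \eqref{E:productDD} reads $\DD^+ = c(\tau)(\partial_t+\AAA)$; decomposing a section $u(t,\cdot)$ in the (discrete) spectral basis of the self-adjoint strongly Callias-type operator $\AAA$ turns $\DD^+u=f$ into uncoupled ODEs $(\partial_t+\lambda)u_\lambda = g_\lambda$, and the generalized APS condition $u(0,\cdot)\in H^{1/2}_{(-\infty,a)}(\AAA)$ forces $u_\lambda(0)=0$ whenever $\lambda\ge a$. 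A mode-wise integration by parts (with a $t$-cutoff to handle the interior face of the collar) controls $\|u_\lambda\|^2_{H^1}+\lambda^2\|u_\lambda\|^2_{L^2}$ by $\|g_\lambda\|^2_{L^2}+\|u_\lambda\|^2_{L^2}$, and summing in $\lambda$ recovers a full $H^1_\AAA$-bound in a collar of $\pM$. Patching the three regions by interior elliptic regularity on the remaining compact middle piece produces the desired coercive inequality
\[
    \|u\|_{H^1(M,E^+)}\ \le\ C\,\big(\,\|\DD^+u\|_{L^2(M,E^-)}\ +\ \|u\|_{L^2(K_R)}\,\big),
\]
and the compactness of $H^1(K_R,E^+)\hookrightarrow L^2(K_R,E^+)$ implies that $\dim\ker\DD^+_B<\infty$ with closed range.

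For the cokernel, the adjoint $\big(\DD^+_{B(a)}\big)^{\ad} = \DD^-_{B^{\ad}(a)}$ from \eqref{E:dual} is $\DD^-$ equipped with a dual generalized APS boundary condition for the self-adjoint strongly Callias-type operator $\AAA^\sharp = c(\tau)\AAA c(\tau)$, and the identical estimate applies (only the direction of the spectral inequality changes). Hence $\dim\ker\big(\DD^+_B\big)^{\ad}<\infty$ and $\DD^+_B$ is Fredholm. The main technical obstacle is the non-compactness of $\pM$: the mode-by-mode calculations must be carried out consistently with the functional-calculus definition of $H^s_\AAA(\pM,E^+_\pM)$ in Definition~\ref{D:Sobolev}, and the integration by parts used to bound the boundary trace must be uniform across the infinitely many spectral modes. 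The discreteness of the spectrum of $\AAA$---a consequence of $\AAA$ itself being strongly Callias-type---is precisely what makes this uniform control possible.
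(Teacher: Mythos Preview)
Your proposal is correct and follows exactly the approach the paper takes: the paper's proof simply cites Theorem~5.4 of \cite{BrShi17} and remarks that ``exactly the same proof works in the general case,'' while you have sketched out what that proof actually contains---the regional coercive estimate (Callias growth at infinity, product decomposition with spectral analysis of $\AAA$ near $\pM$, interior elliptic regularity on a compact middle piece) and the symmetric treatment of the adjoint via \eqref{E:dual}. Your outline is thus more detailed than the paper's own proof, but follows the same route.
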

\begin{proof}
For the case discussed in Remark~\ref{R:grCallias-odd} this is proven in Theorem~5.4 of \cite{BrShi17}. Exactly the same proof works in the general case.
\end{proof}

%-------------------------------------------------------
\subsection{The index of generalized APS boundary value problems}\label{SS:indexAPS}

By \eqref{E:dual} the cokernel of $\DD^+_{B(a)}$ is isomorphic to the kernel of $\DD^-_{B^\ad(a)}$.

%-----------------------------------------
\begin{definition}\label{D:index}
Let $\DD^+$ be a strongly Callias-type operator on a complete Riemannian manifold $M$ which is product near the boundary. Let $B= H_{(-\infty,a)}^{1/2}(\AAA)$ be a generalized APS boundary condition for $\DD^+$ and let $B^\ad= H_{(-\infty,-a]}^{1/2}(\AAA^\sharp)$ be the adjoint of the generalized APS boundary condition. The integer 
\begin{equation}\label{E:def of index}
		\ind\DD^+_B\;:=\;\dim\ker\DD^+_B-\dim\ker(\DD^-)_{B^\ad}\;\in\;\ZZ
\end{equation}
is called the {\em index of the boundary value problem} $\DD^+_B$. 
\end{definition}

It follows directly from \eqref{E:dual} that 
\begin{equation}\label{E:indD-indD*}
	\ind\, (\DD^-)_{B^\ad}\ = \ -\ind \DD^+_B.
\end{equation}

%-------------------------------------------------------
\subsection{More general boundary value conditions}\label{SS:gen bc}
Generalized APS and dual generalized APS boundary conditions are examples of {\em elliptic boundary conditions}, \cite[Definition~4.7]{BrShi17}. In this paper we don't work with general elliptic boundary conditions. However, in Section~\ref{S:sp flow} we need a slight modification of APS boundary conditions, which we define now. 

 %-----------------
\begin{definition}\label{D:rel index}
We say that two closed subspaces $X_1$, $X_2$ of a Hilbert space $H$ are {\em finite rank perturbations}  of each other if there exists a finite dimensional subspace $Y\subset H$ such that $X_2\subset X_1\oplus Y$ and the quotient space $(X_1\oplus Y)/X_2$ has finite dimension. 

The {\em relative index} of $X_1$ and $X_2$ is defined by 
\begin{equation}\label{E:rel index}
	[X_1,X_2] \ := \ \dim\, (X_1\oplus Y)/X_2\ - \ \dim Y.
\end{equation}
\end{definition}

One easily sees that the relative index is independent of the choice of $Y$. We  shall need the following analogue of \cite[Proposition~5.8]{BrShi17}:

%-------------------------------------
\begin{proposition}\label{P:change of B}
Let $\DD$ be a graded self-adjoint strongly Callias-type operator on $M$ and let $B$ be a generalized APS or dual generalized APS boundary condition for $\DD^+$.  If $B_1\subset H^{1/2}_\AAA(\pM,E^+_\pM)$ is a finite rank perturbation of $B$, then the operator $\DD^+_{B_1}$ is Fredholm and  
\begin{equation}\label{E:change of B}
	\ind \DD^+_{B}\ - \ \ind \DD^+_{B_1}\ = \ [B,B_1].
\end{equation}
\end{proposition}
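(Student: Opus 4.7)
The plan is to reduce to the two standard one-sided situations $B\subset B_1$ and $B_1\subset B$, and handle each by the general principle that, for two closed extensions $T_0\subset T_1$ of the same differential operator whose domains differ by a finite-dimensional subspace, Fredholmness of $T_0$ implies Fredholmness of $T_1$ with
\[
	\ind T_1 \ - \ \ind T_0 \ = \ \dim\bigl(\dom T_1/\dom T_0\bigr).
\]
This is a purely functional-analytic statement: enlarging the domain by a finite-dimensional subspace either enlarges the kernel or shrinks the cokernel by the corresponding number of dimensions, and preserves closed range.

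First I would introduce the intermediate boundary condition $B':=B\oplus Y\subset H^{1/2}_\AAA(\pM,E^+_\pM)$, where $Y$ is the finite-dimensional subspace from Definition~\ref{D:rel index}. Because $Y$ is finite-dimensional and consists of $H^{1/2}$-sections, and because the extension map $\dom\DD^+_{\max}\to H^{-1/2}_\AAA(\pM,E^+_\pM)$ is surjective onto the $H^{1/2}$-range of the APS spectral projectors, $B'$ is again an admissible boundary condition in the sense that $\dom\DD^+_{B'}$ is a closed subspace of $\dom\DD^+_{\max}$ (in the graph norm) containing $\dom\DD^+_B$ with
\[
	\dim\bigl(\dom\DD^+_{B'}/\dom\DD^+_B\bigr) \ = \ \dim Y.
\]
By Theorem~\ref{T:fredholm}, $\DD^+_B$ is Fredholm, and then the general principle above yields that $\DD^+_{B'}$ is Fredholm with
\[
	\ind\DD^+_{B'} \ = \ \ind\DD^+_B \ + \ \dim Y.
\]

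Next, since $B_1\subset B'$ with $\dim(B'/B_1)<\infty$, the same general principle applied in the opposite direction shows that $\DD^+_{B_1}$ is Fredholm with
\[
	\ind\DD^+_{B'} \ = \ \ind\DD^+_{B_1} \ + \ \dim(B'/B_1).
\]
Subtracting the two identities gives
\[
	\ind\DD^+_B - \ind\DD^+_{B_1} \ = \ \dim(B'/B_1) - \dim Y \ = \ [B,B_1],
\]
which is \eqref{E:change of B}. An independence check showing that the answer does not depend on the auxiliary choice of $Y$ follows since $[B,B_1]$ itself is independent of $Y$.

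The main obstacle is the first step: one has to verify that passing from $B$ to $B'=B\oplus Y$ (and then to $B_1$) really corresponds to a finite-dimensional change of the graph-closed domain of the unbounded operator, not merely of the trace space at the boundary. This uses the surjectivity and continuity of the trace map $\dom\DD^+_{\max}\to H^{-1/2}_\AAA(\pM,E^+_\pM)$ together with the fact that the extra subspace $Y$ sits inside $H^{1/2}_\AAA$, so that any of its elements is a legitimate boundary value of some element of $\dom\DD^+_{\max}$ (and similarly for any complement of $B_1$ inside $B'$). Once this is set up, everything else is the familiar finite-dimensional index bookkeeping used in \cite[Proposition~5.8]{BrShi17}.
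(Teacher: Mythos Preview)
Your argument is correct and is essentially the proof of \cite[Theorem~8.14]{BaerBallmann12}, which is precisely what the paper invokes: the paper gives no independent argument here but simply states that the proof is a verbatim repetition of that result. The two-step reduction via $B'=B\oplus Y$ together with the elementary index bookkeeping for nested closed extensions with finite-dimensional domain quotient is exactly the mechanism in B\"ar--Ballmann, and your identification of the one genuine technical point---that the finite-dimensional change at the level of boundary traces lifts to a finite-dimensional change of graph-closed domains via surjectivity of the trace map---is the right thing to flag.
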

The proof of the proposition is a verbatim repetition of the proof of \cite[Theorem~8.14]{BaerBallmann12}.

As an immediate consequence of Proposition~\ref{P:change of B} we obtain the following
%-------------------------------------
\begin{corollary}\label{C:change of B}
Let $\AAA$ be the restriction of $\DD^+$ to $\pM$ and let  $B_0= H^{1/2}_{(-\infty,0)}(\AAA)$ and $\oB_0= H^{1/2}_{(-\infty,0]}(\AAA)$ be the APS and the dual APS boundary conditions respectively. Then 
\begin{equation}\label{E:change of B2}
	\ind \DD^+_{\oB_0} \ = \ \ind\DD^+_{B_0}\ + \ \dim\ker \AAA.
\end{equation}
\end{corollary}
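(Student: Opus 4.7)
The plan is to deduce the corollary as a direct application of Proposition~\ref{P:change of B} with $B = B_0$ and $B_1 = \oB_0$. The only substantive content is to identify the relative index $[B_0, \oB_0]$ with $-\dim\ker\AAA$.

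First I would observe that since $\AAA$ is a self-adjoint strongly Callias-type operator on $\pM$, it has purely discrete spectrum, and in particular $\ker\AAA$ is finite-dimensional. Eigensections of $\AAA$ lie in $H^s_\AAA(\pM, E^+_\pM)$ for every $s \in \RR$, so in particular $\ker\AAA \subset H^{1/2}_\AAA(\pM,E^+_\pM)$. Moreover, by the spectral decomposition of $\AAA$ we have
\[
	\oB_0 \ = \ H^{1/2}_{(-\infty,0]}(\AAA) \ = \ H^{1/2}_{(-\infty,0)}(\AAA) \,\oplus\, \ker\AAA \ = \ B_0 \oplus \ker\AAA,
\]
and this direct sum decomposition is orthogonal in $H^{1/2}_\AAA(\pM,E^+_\pM)$.

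Next I would apply Definition~\ref{D:rel index} with $Y := \ker\AAA$. Then $\oB_0 \subset B_0 \oplus Y$, and in fact $B_0 \oplus Y = \oB_0$, so the quotient $(B_0 \oplus Y)/\oB_0$ is the zero space. Hence $\oB_0$ is a finite rank perturbation of $B_0$, and
\[
	[B_0, \oB_0] \ = \ \dim (B_0\oplus Y)/\oB_0 \ - \ \dim Y \ = \ 0 \ - \ \dim \ker \AAA \ = \ -\dim\ker\AAA.
\]

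Finally, Proposition~\ref{P:change of B} applies to $B_0$ (which is the APS boundary condition, and therefore a generalized APS boundary condition in the sense of Definition~\ref{D:gAPS} with $a=0$) and yields Fredholmness of $\DD^+_{\oB_0}$ together with
\[
	\ind \DD^+_{B_0} \ - \ \ind \DD^+_{\oB_0} \ = \ [B_0,\oB_0] \ = \ -\dim\ker\AAA,
\]
which is exactly \eqref{E:change of B2}. There is essentially no obstacle here beyond bookkeeping; the only point that deserves verification is the sign in the definition of the relative index, which I would double-check to ensure the formula comes out with $+\dim\ker\AAA$ on the right-hand side of \eqref{E:change of B2} rather than $-\dim\ker\AAA$.
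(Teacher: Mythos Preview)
Your proof is correct and is exactly the argument the paper has in mind: the corollary is stated there as an immediate consequence of Proposition~\ref{P:change of B}, and your computation of $[B_0,\oB_0]=-\dim\ker\AAA$ via $Y=\ker\AAA$ is the intended one-line verification.
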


%-----------
\subsection{The splitting theorem}\label{SS:splittingthm}

Let $M$ be a complete manifold. Let $N\subset M$ be a hypersurface disjoint from $\dm$ such that cutting $M$ along $N$ we obtain a manifold $M'$ (connected or not) with $\dm$ and two copies of $N$ as boundary. So we can write $M'=(M\setminus N)\sqcup N_1\sqcup N_2$.

Let $E=E^+\oplus E^-\to M$ be a $\ZZ_2$-graded Dirac bundle over $M$ and $\DD^\pm:C^\infty(M,E^\pm)\to C^\infty(M,E^\mp)$ be strongly Callias-type operators as in Subsection \ref{SS:grCallias}. They induce $\ZZ_2$-graded Dirac bundle $E'=(E')^+\oplus(E')^-\to M'$ and strongly Callias-type operators 
\[
	(\DD')^\pm:C^\infty(M',(E')^\pm)\ \to \ C^\infty(M',(E')^\mp)
\]
on $M'$. We assume that all structures are product near $N_1$ and $N_2$. Let $\AAA$ be the restriction of $(\DD')^+$ to $N_1$. Then $-\AAA$ is the restriction of $(\DD')^+$ to $N_2$ and, thus, the restriction of $(\DD')^+$ to $N_1\sqcup N_2$ is $\AAA'=\AAA\oplus(-\AAA)$. The following {\em Splitting Theorem} is an analogue of Theorem 5.11 of \cite{BrShi17} with the same proof.

%--------------
\begin{theorem}\label{T:splitting}
Suppose $M,\DD^+,M',(\DD')^+$ are as above. Let $B_0$ be a generalized APS boundary condition on $\dm$. Let $B_1=H_{(-\infty,0)}^{1/2}(\AAA)$ and $B_2= H_{(-\infty,0]}^{1/2}(-\AAA)= H_{[0,\infty)}^{1/2}(\AAA)$ be the APS and the dual APS boundary conditions for $(\DD')^+$ along $N_1$ and $N_2$, respectively. Then $(\DD')^+_{B_0\oplus B_1\oplus B_2}$ is a Fredholm operator and
\[
	\ind\DD^+_{B_0}\;=\;\ind(\DD')^+_{B_0\oplus B_1\oplus B_2}.
\]
\end{theorem}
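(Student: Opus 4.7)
My plan is to adapt the proof of \cite[Theorem~5.11]{BrShi17}, which transfers with only cosmetic changes to the present graded $\ZZ_2$-setting. The argument has three steps: Fredholmness of the cut operator, construction of comparison maps between the kernels and cokernels of $\DD^+_{B_0}$ and $(\DD')^+_{B_0\oplus B_1\oplus B_2}$, and reduction of the index identity to the invertibility of a model cylinder problem.

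\emph{Fredholmness of the cut operator.} Since $N$ is compactly located in $M$ and all structures are product near $N$, the operator $(\DD')^+$ on the complete manifold $M'$ is again a graded self-adjoint strongly Callias-type operator; the Callias potential and its essential support conditions are inherited unchanged. The boundary $\partial M' = \partial M \sqcup N_1 \sqcup N_2$ carries three elliptic boundary conditions of generalized or dual generalized APS type, and Theorem~\ref{T:fredholm}, whose proof applies verbatim to dual generalized APS conditions, then produces the Fredholm operator $(\DD')^+_{B_0\oplus B_1\oplus B_2}$.

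\emph{Cut and paste maps.} On a product collar $(-\varepsilon,\varepsilon)\times N$ of $N$ in $M$, which cuts into $[0,\varepsilon)\times N$ attached to $N_1$ and $(-\varepsilon,0]\times N$ attached to $N_2$ in $M'$, I would build a restriction map $\kappa:\ker\DD^+_{B_0}\to\ker(\DD')^+_{B_0\oplus B_1\oplus B_2}$ and a gluing map $\pi$ in the opposite direction. For $u\in\ker\DD^+_{B_0}$ with trace $v = u|_N = v_- + v_+$ decomposed by the spectrum of $\AAA$, the map $\kappa$ is obtained from $u|_{M'}$ by subtracting $e^{-t\AAA}v_+$ on the $N_1$-side collar (to kill the non-negative spectral part of the trace at $N_1$) and a mirror correction on the $N_2$-side collar (to kill the negative spectral part at $N_2$). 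The map $\pi$ is defined by the opposite recipe. Analogous maps between the cokernels are constructed via the adjoint boundary conditions in \eqref{E:dual}.

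\emph{Reduction to the model cylinder and main obstacle.} The compositions $\pi\kappa$ and $\kappa\pi$ differ from the identity by operators concentrated on the collar, and the discrepancy is governed by the model cylinder operator $\DD_C^+ = c(\tau)(\partial_t + \AAA)$ on $C_\ell = [0,\ell]\times N$, with APS at $t=0$ and dual APS at $t=\ell$. Separation of variables gives any kernel element as $u(t)=\sum_\lambda e^{-t\lambda}a_\lambda\phi_\lambda$; the APS condition at $t=0$ forces $a_\lambda = 0$ for $\lambda\ge 0$, while the dual APS condition at $t=\ell$ (which requires $u(\ell)$ to have only non-negative $\AAA$-spectrum) forces $a_\lambda = 0$ for $\lambda < 0$, so the kernel is trivial; the cokernel vanishes by the mirror argument for the formal adjoint. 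Hence $\kappa$ and $\pi$ induce isomorphisms on kernels and cokernels, yielding the desired index identity. The main obstacle I foresee is verifying that the exponential extensions $e^{\pm t\AAA}v_\pm$ define honest elements of $\dom(\DD')^+_{\max}$ on the collar, which rests on the boundedness of the spectral projections of $\AAA$ on $H^{1/2}_\AAA(N)$ and on the spectral gap at infinity furnished by the strongly Callias-type hypothesis; this is exactly the same technical point that is handled in \cite[\S5]{BrShi17}.
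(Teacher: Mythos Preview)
Your proposal is correct and matches the paper's approach exactly: the paper gives no independent proof of this theorem but simply states that it ``is an analogue of Theorem~5.11 of \cite{BrShi17} with the same proof,'' which is precisely your opening claim. Your additional sketch of the cut-and-paste and model-cylinder argument goes beyond what the paper supplies, and is a reasonable outline of how such a splitting theorem is proved; just be aware that the precise mechanism in \cite{BrShi17} (following B\"ar--Ballmann) may run through a transmission condition and a homotopy of boundary conditions rather than the explicit exponential corrections $e^{-t\AAA}v_\pm$ you describe, so if you later need the details you should follow the cited proof rather than your sketch.
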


%---------------------
\subsection{Reduction of the index to an essentially cylindrical manifold}\label{SS:esscylindrical}

The study of the index of Callias-type operators on manifolds without boundary can be reduced to a computation on the essential support. For manifolds with boundary we want an analogous subset, but the one which contains the boundary. Such a set is necessarily non-compact, but we want it to be ``similar to a compact set". In \cite[Section 6]{BrShi17}, we introduce the notion of essentially cylindrical manifolds, which replace the role of compact subsets in our study of boundary value problems.

%-----------------
\begin{definition}\label{D:esscylindrical}
An {\em essentially cylindrical} manifold  $C$ is a complete Riemannian manifold whose boundary is a union of two disjoint manifolds, $\partial C=N_0\sqcup N_1$, such that 
\begin{enumerate}
\item
there exist a compact set $K\subset C$, an open Riemannian manifold $N$, and an isometry $C\setminus K\simeq [0,\delta]\times N$;

\item
under the above isometry $N_0\setminus K =\{0\}\times N$ and $N_1\setminus K =\{\delta\}\times N$. 
\end{enumerate}
See Figure 1 for an example of essentially cylindrical manifolds.
\end{definition}

\begin{center}
\begin{overpic}[height=5cm]{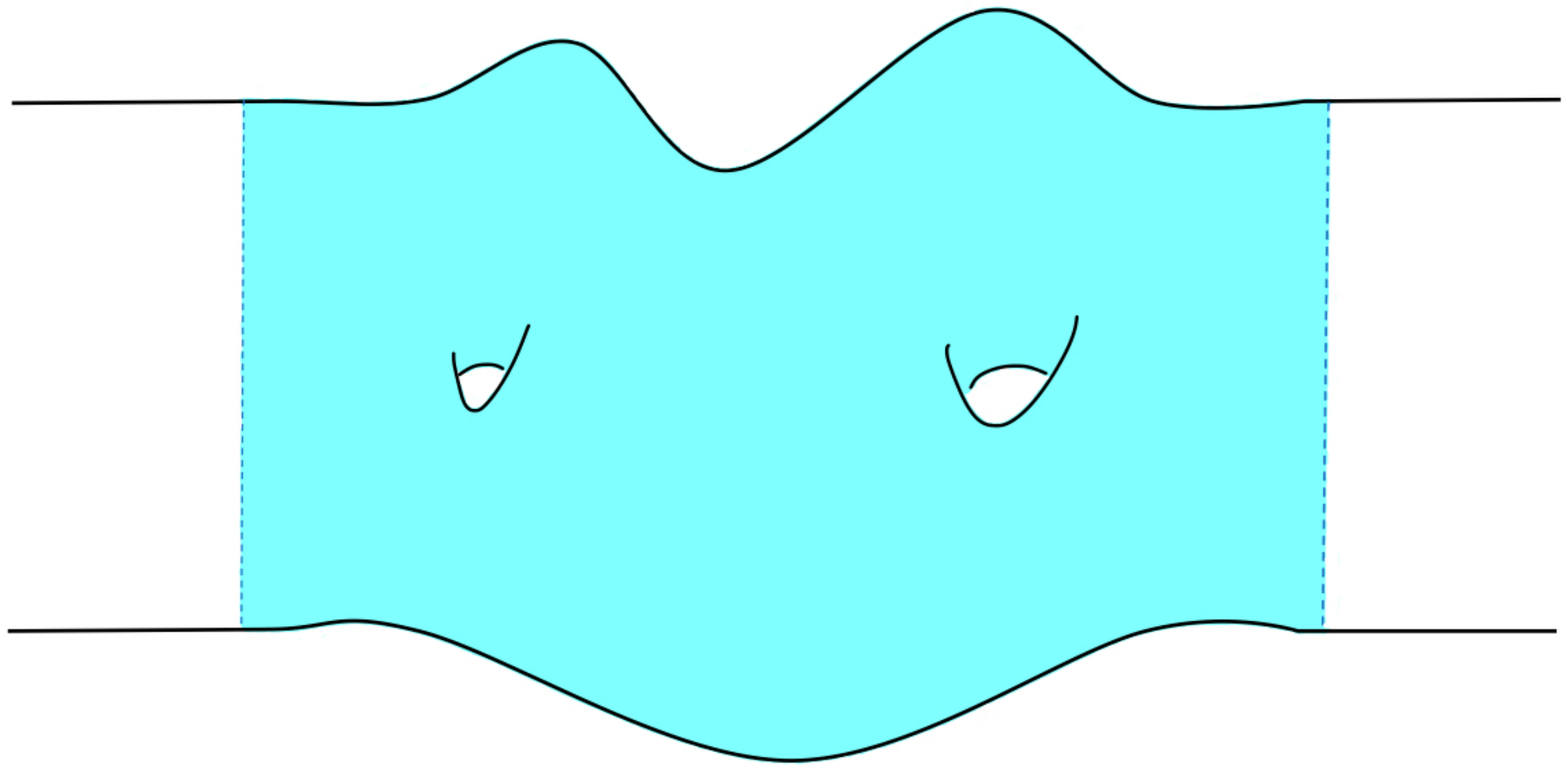}
\put(98,40){$N_0$}
\put(98,8){$N_1$}
\put(47,23){$K$}
\end{overpic}\\
Figure 1. An essentially cylindrical manifold  $C$
\end{center}

%-----------------------------------------
\begin{definition}\label{D:almost compact support}
Let $\DD^+$ be a strongly Callias-type operator on $M$. An {\em almost compact essential support of\, $\DD^+$} is a smooth submanifold $M_1\subset M$  with smooth boundary, which contains $\pM$ and such that there exist a (compact) essential support $K\subset M$ and $\varepsilon\in (0,r)$ such that 
\begin{equation}\label{E:almost compact}
	M_1\setminus K \ = \ (\pM\setminus K)\times [0,\varepsilon] 
	\subset Z_r. 
\end{equation}
\end{definition}

An almost compact essential support is a special type of essentially cylindrical manifolds. It is shown in \cite[Lemma~6.5]{BrShi17} that for any strongly Callias-type operator which is product near $\pM$ there exists an  almost compact essential support. Figure 2 illustrates the idea of building an almost compact essential support.

\begin{center}
\begin{overpic}[height=8cm]{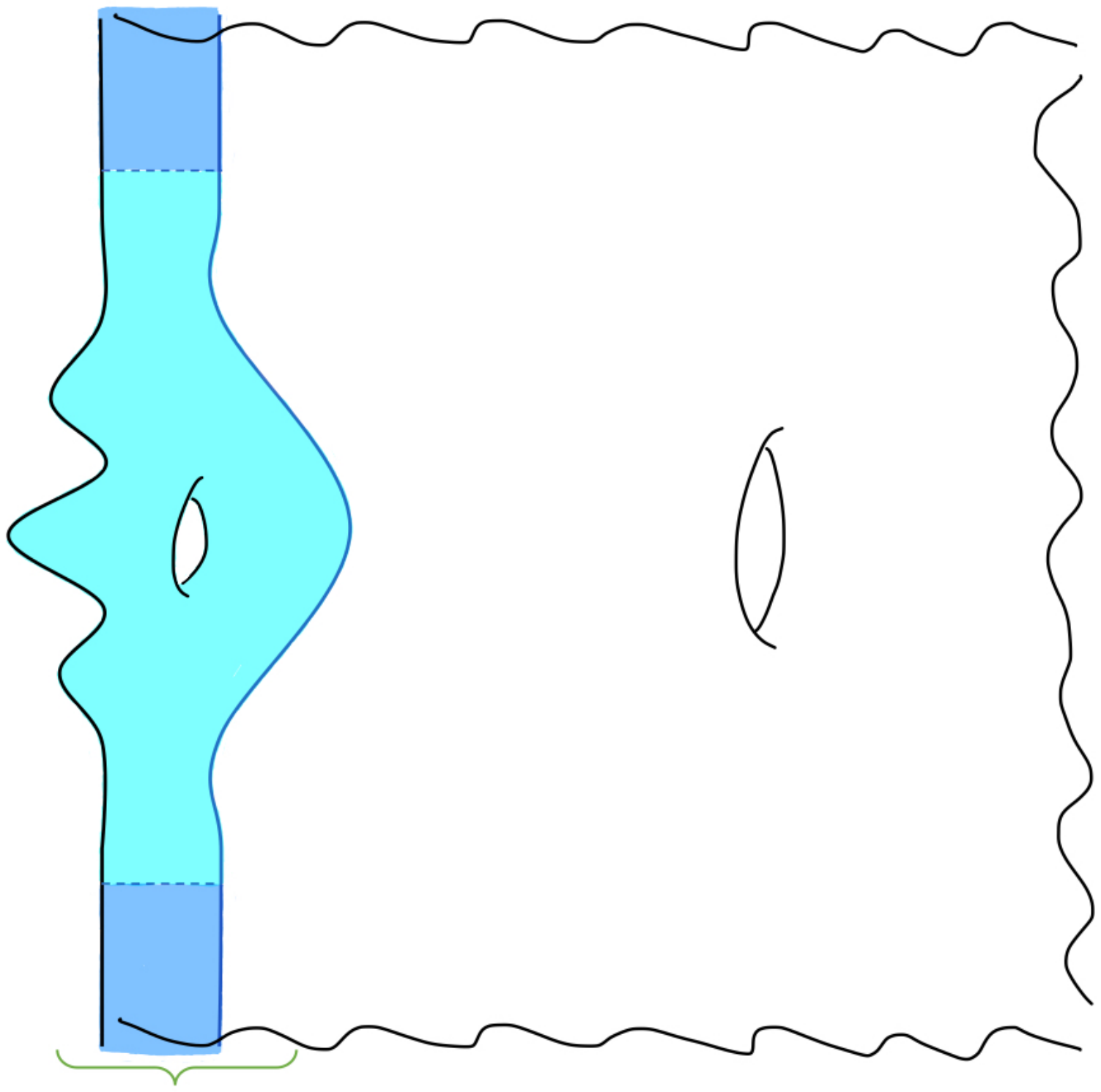}
\put(-5,47){$\pM$}
\put(34,47){$N_1$}
\put(15,36){$K$}
\put(15,-3){$M_1$}
\put(50,-4){$M$}
\end{overpic}\\
\vspace{.4cm}
Figure 2. The idea to obtain an almost compact essential support $M_1$
\end{center}

%---------------------------------------------
\subsection{The index on an almost compact essential support}\label{SS:index ac}
Suppose $M_1\subset M$ is an almost compact essential support of $\DD^+$. Set $N_0:=\pM$ and let $N_1\subset M$ be such that $\pM_1=N_0\sqcup N_1$ as in Definition \ref{D:esscylindrical}. The restriction of $\DD$ to a neighborhood of $N_1$ need not be product. Since in this paper we only consider boundary value problems for operators which are product near the boundary, we first deform $\DD$ to a product form. It is shown in \cite[Lemma~6.8]{BrShi17} that there exists a perturbation of all the structures such that 
\begin{enumerate}
\item 
the new structures are product near $N_1$. In particular, the corresponding Callias-type operator  $\DD'$ is product near $N_1$.
\item 
$\DD'$ has a compact essential support, which is contained in $M_1$.
\item
The difference $\DD-\DD'$ vanishes near $\pM$ and outside of a compact subset of $M_1$. In this situation we say that $\DD'$ (or $(\DD')^\pm$) is  a {\em compact perturbation of $\DD$ (or $\DD^\pm$)}. 
\end{enumerate}

Let $M_1\subset M$ be an almost compact essential support of $\DD^+$. Let $(\DD')^+$ be a compact perturbation of $\DD^+$ which is product near the boundary (cf. \cite[\S6.6]{BrShi17}). Let $\AAA$ be the restriction of $\DD^+$ to $\pM$. It is also the restriction of $(\DD')^+$. We denote by $-\AAA_1$ the restriction of $(\DD')^+$ to $N_1$. Theorem 6.10 of \cite{BrShi17} claims that the index of an elliptic boundary value problem on $M$ can be reduced to an index on an almost compact essential support:

%-----------------------------------------
\begin{theorem}\label{T:indM=indM1}
Suppose $M_1\subset M$ is an almost compact essential support of\/ $\DD^+$ and let\/ $\pM_1=\pM\sqcup N_1$.  Let\/ $(\DD')^+$ be a compact perturbation of\/ $\DD^+$ which is product near $N_1$ and  such that there is a compact essential support for $(\DD')^+$ which is contained in $M_1$. Let $B_0$ be a generalized APS boundary condition for $\DD^+$. View $(\DD')^+$ as an operator on $M_1$ and let 
\[
	B_1 \ = \ H_{(-\infty,0)}^{1/2}(-\AAA_1) \ = \ 
	H_{(0,\infty)}^{1/2}(\AAA_1)
\] 
be the APS boundary condition for $(\DD')^+$ at $N_1$. Then
\begin{equation}\label{E:indM=indM1}
	\ind \DD^+_{B_0}\ = \ \ind (\DD')^+_{B_0\oplus B_1}.
\end{equation}
\end{theorem}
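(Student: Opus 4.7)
The plan is to combine Fredholm invariance under compact perturbation with the Splitting Theorem \ref{T:splitting}, and then to show that the ``exterior'' piece $M_2:=\overline{M\setminus M_1}$ (a complete manifold with boundary $N_1$, a closed hypersurface in $M$ disjoint from $\pM$) contributes nothing to the index.

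First I would reduce to $(\DD')^+$: since $\DD$ and $\DD'$ agree near $\pM$ they share the boundary operator $\AAA$, hence the APS condition $B_0$, and their difference is a first-order operator supported in a compact subset of the interior of $M_1$. By Rellich compactness this is a relatively compact perturbation of the Fredholm operator $\DD^+_{B_0}$, so $\ind\DD^+_{B_0}=\ind(\DD')^+_{B_0}$. Next, since $(\DD')^+$ is product near $N_1$, the Splitting Theorem applies at $N_1$ and gives
\begin{equation*}
	\ind(\DD')^+_{B_0}\;=\;\ind(\DD')^+_{B_0\oplus B_1}\big|_{M_1}\;+\;\ind(\DD')^+_{B_2}\big|_{M_2},
\end{equation*}
where $B_1=H^{1/2}_{(-\infty,0)}(-\AAA_1)=H^{1/2}_{(0,\infty)}(\AAA_1)$ is the APS condition on the $M_1$-side of $N_1$ and $B_2=H^{1/2}_{(-\infty,0]}(\AAA_1)$ is the dual APS condition on the $M_2$-side; the spectral cuts match cleanly because the inward normal at $N_1$ flips when crossing from $M_1$ to $M_2$.

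The main step is then to prove $\ind(\DD')^+_{B_2}\big|_{M_2}=0$. Because an essential support of $(\DD')^+$ lies inside $M_1$, the Callias positivity estimate \eqref{E:strinvinfA} holds throughout $M_2$ with $R$ as large as we wish, so $\langle u,(\Psi^2+[D,\Psi]_+)u\rangle\ge R\|u\|^2$ for every smooth compactly supported $u$. For $u\in\dom(\DD')^+_{B_2}$, the Weitzenb\"ock-type identity \eqref{E:Calliaseq}, Green's formula on $M_2$, and the non-negativity of $\|D^+u\|^2$ give
\begin{equation*}
	\|(\DD')^+u\|^2_{L^2(M_2)}\;\ge\;R\,\|u\|^2_{L^2(M_2)}\;+\;(\text{boundary integral at }N_1),
\end{equation*}
and the dual APS condition, phrased via spectral projections of the $M_2$-side boundary operator, forces the boundary integral to be non-negative. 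Thus $u=0$; the same argument applied to $(\DD')^-$ with the adjoint boundary condition kills the cokernel via \eqref{E:dual}.

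The hard part will be this sign check: tracking the orientation of $N_1$ when passing from $M_1$ to $M_2$ and using $\AAA^\sharp=c(\tau)\AAA c(\tau)$ to verify that the dual APS condition really produces a boundary integral of the right sign. Once the sign is pinned down, the bulk positivity estimate closes the argument immediately.
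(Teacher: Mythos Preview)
Your three-step strategy (stability under compact perturbation, the Splitting Theorem~\ref{T:splitting}, and vanishing on $M_2$ via the Callias positivity) is exactly the route taken in \cite[Theorem~6.10]{BrShi17}, which the present paper simply cites rather than reproving. The identification of the boundary conditions $B_1$ and $B_2$ on the two sides of $N_1$ is correct, and the vanishing step is precisely \cite[Corollary~5.13]{BrShi17}; your sign analysis of the boundary term under the dual APS condition is the right mechanism.

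There is, however, a genuine gap in your first step. You say the difference $\DD-\DD'$ is a compactly supported \emph{first-order} operator and then invoke ``Rellich compactness'' to conclude it is a relatively compact perturbation. Rellich gives compactness of $H^1(K)\hookrightarrow L^2(K)$, which makes compactly supported \emph{zeroth-order} perturbations relatively compact; for a first-order $P$ one only obtains boundedness $P:\dom(\DD^+_{B_0})\to L^2$, not compactness, because the domain embeds into $H^1_{\loc}$ merely boundedly, not compactly. Since the construction of $\DD'$ in \cite[Lemma~6.8]{BrShi17} deforms the metric and Clifford multiplication near the compact part of $N_1$, the difference is genuinely first-order, and your argument as written does not close. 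The fix is to replace the relative-compactness claim by a homotopy argument: the deformation of structures producing $\DD'$ from $\DD$ can be parametrized by $t\in[0,1]$, yielding a continuous family $\DD_t$ of strongly Callias-type operators that agree with $\DD$ outside a fixed compact set; each $\DD_{t,B_0}^+$ is Fredholm by Theorem~\ref{T:fredholm}, and continuity of the index gives $\ind\DD^+_{B_0}=\ind(\DD')^+_{B_0}$. (A minor slip: on $M_2$ you only know $\Psi^2-|[D,\Psi]_+|\ge R$ for \emph{some} fixed $R>0$ determined by the chosen essential support, not ``as large as we wish''; but any positive $R$ suffices for the vanishing argument.)
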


%%%%%%%%%%%%%%%%%%%%%%%%%
%%%%%%%%%%%%%%%%%%%%%%%%%
\section{The index of operators on essentially cylindrical manifolds}\label{S:esscyl}

In this section we discuss the index of strongly Callias-type operators on even-dimensional essentially cylindrical manifolds. It is parallel to \cite[Section 7]{BrShi17}, where the odd-dimensional case was considered. 

From now on we assume that $M$ is an \emph{oriented even-dimensional} essentially cylindrical manifold whose boundary $\pM= N_0\sqcup N_1$ is a disjoint union of two non-compact manifolds $N_0$ and $N_1$. Let $E$ be a Dirac bundle over $M$. As pointed out in Remark \ref{R:grCallias-even}, there is a natural $\ZZ_2$-grading $E=E^+\oplus E^-$ on $E$. Let $\DD^+: C^\infty(M,E^+)\to C^\infty(M,E^-)$ be a strongly Callias-type operator as in Definition \ref{D:grCallias} (these data might or might not come as a restriction of another operator to its almost compact essential support. In particular, we don't assume that the restriction of $\DD^+$ to $N_1$ is invertible). Let $\AAA_0$ and $-\AAA_1$ be the restrictions of $\DD^+$ to $N_0$ and $N_1$ respectively. 

We first recall some definitions from \cite[Section 7]{BrShi17}.

%---------------------------------------------
\subsection{Compatible data}\label{SS:compatible}

Let $M$ be an essentially cylindrical manifold and let $\pM=N_0\sqcup N_1$. As usual, we identify a tubular neighborhood of $\pM$ with the product 
\[
	Z_r\ := \  \big(\, N_0\times[0,r)\,\big)\,\sqcup \, \big(\, N_1\times[0,r)\,\big)
	\ \subset \ M.
\]

%-----------------------------------------
\begin{definition}\label{D:essequal}
We say that another essentially cylindrical manifold $M'$ is {\em compatible} with  $M$ if there is a fixed isometry between $Z_r$ and a neighborhood $Z_r'\subset M'$ of the boundary of $M'$. 
\end{definition}

Note that if $M$ and $M'$ are compatible then their boundaries are isometric.

Let $M$ and $M'$ be compatible essentially cylindrical manifolds and let $Z_r$ and $Z_r'$ be as above. Let $E\to M$ be a $\ZZ_2$-graded Dirac bundle over $M$ and let  $\DD^+: C^\infty(M,E^+)\to C^\infty(M,E^-)$  be a strongly Callias-type operator whose restriction to $Z_r$ is product and such that $M$ is an almost compact essential support of $\DD^+$.  This  means that there is a compact set $K\subset M$ such that $M\setminus K= [0,\varepsilon]\times N$ and the restriction of $\DD^+$ to $M\setminus K$ is product (i.e.  is given by \eqref{E:productDD}). Let 
 $E'\to M'$ be a $\ZZ_2$-graded Dirac bundle over $M'$ and let $(\DD')^+: C^\infty(M',(E')^{+})\to C^\infty(M',(E')^{-})$ be a strongly Callias-type operator, whose restriction to  $Z_r'$ is product and such that  $M'$ is an almost compact essential support of $(\DD')^{+}$.

%-----------------------------------------
\begin{definition}\label{D:essequal operators}
In the situation discussed above we say that $\DD^+$ and $(\DD')^{+}$ are {\em compatible} if there is an isomorphism $E|_{Z_r}\simeq E'|_{Z_r'}$ of graded Dirac bundles which identifies the restriction of $\DD^+$ to $Z_r$ with the restriction of $(\DD')^{+}$ to $Z_r'$.  
\end{definition}

Let $\AAA_0$ and $-\AAA_1$ be the restrictions of $\DD^+$ to $N_0$ and $N_1$ respectively (the sign convention means that we think  of  $N_0$ as the ``left boundary'' and of $N_1$ as the ``right boundary'' of $M$). 
Let $B_0=H^{1/2}_{(-\infty,0)}(\AAA_0)$ and $B_1=H^{1/2}_{(-\infty,0)}(-\AAA_1)= H^{1/2}_{(0,\infty)}(\AAA_1)$ be the APS boundary conditions for $\DD^+$ at $N_0$ and $N_1$ respectively. Since $\DD^+$ and $(\DD')^{+}$ are equal near the boundary, $B_0$ and $B_1$ are also APS boundary conditions for $(\DD')^{+}$.

We denote by $\alpha_{\rm AS}(\DD^+)$ the Atiyah--Singer integrand of $\DD^+$. It can be written as
\[
	\alpha_{\rm AS}(\DD^+)\ := \ (2\pi i)^{-\dim M}\,\hat{A}(M)\,{\rm ch}(E/S)
\]
where $\hat{A}(M)$  and ${\rm ch}(E/S)$ are  the differential forms representing the $\hat{A}$-genus of $M$ and the {\em relative Chern character} of $E$, cf.  \cite[\S4.1]{BeGeVe}. Note that $\alpha_{\rm AS}(\DD^+)$ depends only on the metric on $M$ and the Clifford multiplication on $E$ and thus is independent of the potential $\Psi$.

Since outside of a compact set $K$, $M$ and $E$ are product, the interior multiplication by $\p/\p t$ annihilates  $\AS$. Hence, the top degree component of $\AS$ vanishes on $M\setminus K$. We conclude that the integral  $\int_M\alpha_{\rm AS}(\DD^+)$ is well-defined and finite. Similarly, $\int_{M'}\alpha_{\rm AS}((\DD')^+)$ is well-defined and finite.

%-----------------------------------------
\begin{theorem}\label{T:indep of D}
Suppose $\DD^+$ is a strongly Callias-type operator on an oriented even-dimensional essentially cylindrical manifold $M$ such that $M$ is an almost compact essential support of\, $\DD^+$. Suppose that the operator  $(\DD')^{+}$ is compatible with $\DD^+$. Let $\pM=N_0\sqcup N_1$ and let $B_0= H^{1/2}_{(-\infty,0)}(\AAA_0)$ and $B_1= H^{1/2}_{(-\infty,0)}(-\AAA_1)= H^{1/2}_{(0,\infty)}(\AAA_1)$ be the APS boundary conditions for $\DD^+$ (and, hence, for $(\DD')^{+}$) at  $N_0$ and $N_1$ respectively. Then 
\begin{equation}\label{E:indep of D}
	\ind \DD^+_{B_0\oplus B_1}\;-\;\int_M\alpha_{\rm AS}(\DD^+) 
	\ = \ 
	\ind (\DD')^+_{B_0\oplus B_1}\;-\;\int_{M'}\alpha_{\rm AS}((\DD')^{+}).
\end{equation}
In particular, $\ind \DD^+_{B_0\oplus B_1}-\int_M\alpha_{\rm AS}(\DD^+)$ depends only on the restrictions $\AAA_0$ and $-\AAA_1$ of $\DD^+$ to the boundary.
\end{theorem}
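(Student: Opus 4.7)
The plan is to form a boundaryless manifold by gluing $M$ to $M'$ and then to compute both sides using the Splitting Theorem together with an Atiyah--Singer formula on the glued manifold. First I would glue $M$ to $M'$ along the common cylindrical neighborhood $Z_r\simeq Z_r'$ of the boundary, reversing the orientation of $M'$ so that the two orientations match at the seam. The compatibility of the Dirac bundles and operators near $\pM$ (together with Clifford multiplication by the normal $c(\tau)$ to reconcile the grading flip that accompanies the orientation reversal) assembles everything into a complete even-dimensional manifold $\hat M$ without boundary, carrying a graded Dirac bundle $\hat E=\hat E^+\oplus\hat E^-$ and a graded strongly Callias-type operator $\hat\DD^+$. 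Outside a compact set $\hat M$ is isometric to $S^1\times N$ (the two copies of $[0,\delta]$ close into a circle of length $2\delta$), and $\hat\DD^+$ is a product operator there whose cross-sectional operator inherits the spectral gap from Definition~\ref{D:saCallias}(ii).

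Next I would apply the Splitting Theorem~\ref{T:splitting} to $\hat M$ cut along the interior hypersurface $\pM=N_0\sqcup N_1$, imposing APS boundary conditions on the $M$-side and dual APS on the $M'^{\op}$-side. The $M$-contribution is $\ind\DD^+_{B_0\oplus B_1}$. For the $M'^{\op}$-contribution, the orientation reversal swaps the $\ZZ_2$-grading, so $\hat\DD^+|_{M'^{\op}}$ is identified (via $c(\tau)$) with $(\DD')^-$ on $M'$; the dual APS conditions on $M'^{\op}$ at $N_0,N_1$ transport to the adjoint APS conditions $B_0^{\ad}\oplus B_1^{\ad}$ for $(\DD')^+$ on $M'$, using the anti-intertwining $c(\tau)\,\AAA\,c(\tau)^{-1}=-\AAA^\sharp$. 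The duality relation \eqref{E:dual} together with \eqref{E:indD-indD*} then identifies this contribution with $-\ind(\DD')^+_{B_0\oplus B_1}$, yielding
\[
\ind\hat\DD^+\;=\;\ind\DD^+_{B_0\oplus B_1}\;-\;\ind(\DD')^+_{B_0\oplus B_1}.
\]

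The crucial remaining step is the Atiyah--Singer-type formula $\ind\hat\DD^+=\int_{\hat M}\AS(\hat\DD^+)$ on the non-compact boundaryless manifold $\hat M$. I would prove this by the McKean--Singer heat-equation method: $\ind\hat\DD^+=\Tr_s\,e^{-t\hat\DD^2}$ for all $t>0$; the small-$t$ limit gives the local AS density by standard Getzler rescaling (which is entirely local and hence insensitive to noncompactness), while trace-class of $e^{-t\hat\DD^2}$ and the interchange of the small-$t$ limit with integration over the cylindrical end $S^1\times N$ are controlled by the uniform spectral gap coming from Definition~\ref{D:saCallias}. Once this is in hand, tracking the sign of the supertrace on the two pieces (the chirality on $\hat M$ restricts to $\Gamma_M$ on $M$ and to $-\Gamma_{M'}$ on $M'^{\op}$) gives
\[
\int_{\hat M}\AS(\hat\DD^+)\;=\;\int_M\AS(\DD^+)\;-\;\int_{M'}\AS((\DD')^+).
\]
Comparing with the displayed identity above and rearranging produces \eqref{E:indep of D}.

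The main obstacle is the AS-type formula for $\hat\DD^+$ on the non-compact boundaryless $\hat M$: this is a Callias-type index theorem for graded even-dimensional operators in its own right, and the delicate point is justifying the trace-class property and controlling the large-time behaviour of the heat supertrace on the product end $S^1\times N$ with $N$ non-compact. This is the substitute, in the even-dimensional setting of this paper, for the Anghel--Bunke odd-dimensional Callias index theorem used in \cite{BrShi17}, and is where the ``completely different proof'' promised in the introduction must be carried out.
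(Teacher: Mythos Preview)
Your overall architecture matches the paper's exactly: glue $M$ to $-M'$ along the common collar to obtain a complete boundaryless manifold $\tilde M$ with a graded strongly Callias-type operator $\tilde\DD^+$, apply the Splitting Theorem along $N_0\sqcup N_1$, and reduce everything to the identity $\ind\tilde\DD^+=\int_{\tilde M}\AS(\tilde\DD^+)$. Your bookkeeping for the grading flip, the identification of $\tilde\DD^+|_{-M'}$ with (a sign of) $(\DD')^-$, and the sign in the AS integrand is also in line with the paper.

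The genuine divergence is in how you propose to prove the key lemma $\ind\tilde\DD^+=\int_{\tilde M}\AS(\tilde\DD^+)$. You suggest a direct McKean--Singer/Getzler argument on the non-compact $\tilde M$, and you correctly flag the trace-class and limit-exchange issues as the main obstacle. That route is not what the paper does, and in the generality considered here it is not clear it can be carried out without extra hypotheses of Fox--Haskell type (which the paper explicitly wants to avoid). The paper instead gives an elementary reduction to the \emph{compact} APS index theorem: choose a compact essential support $\tilde K\subset\tilde M$ with smooth boundary $\Sigma=S^1\times L$ on which all structures are product; perturb $\tilde\DD^+$ compactly to be product near $\Sigma$; split along $\Sigma$. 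The piece on $\tilde M\setminus\tilde K$ has empty essential support and hence index $0$ by the vanishing result of \cite[Corollary~5.13]{BrShi17}. The piece on the compact $\tilde K$ is computed by the classical APS theorem, and the boundary $\eta$-term is shown to vanish by exhibiting an explicit unitary $\Theta$ (built from $c(\omega)c(d\theta)$ and the involution $\theta\mapsto-\theta$ on the $S^1$ factor) that anticommutes with the restriction of the operator to $\Sigma$, forcing the spectrum on $\Sigma$ to be symmetric about $0$. This is precisely the ``completely different proof'' alluded to in the introduction, and it sidesteps all heat-kernel analysis on the non-compact manifold. Your proposal would therefore need to replace the heat-kernel step by this compact reduction (or supply the missing analysis) to be complete.
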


The rest of the section is devoted  to the  proof of this theorem.

%-------------------------------------
\begin{remark}\label{R:compare with odd}
In \cite{BrShi17} the odd dimensional version of Theorem~\ref{T:indep of D} was considered. Of course, in this case $\AS$ vanishes identically and Theorem~7.5 of \cite{BrShi17} states that the indexes of compatible operators are equal. The proof in \cite{BrShi17} is based on an application of a Callias-type index theorem and can not be adjusted to our current situation. Consequently, a completely different proof is proposed below.
\end{remark}
%---------------------------------------------
\subsection{Gluing the data together}\label{SS:glue}

We follow \cite[\S7.6, \S7.7]{BrShi17} to glue  $M$ with $M'$ and $\DD^+$ with $(\DD')^{+}$.

Let $-M'$ denote  the  manifold $M'$ with the opposite orientation. We identify a neighborhood of the boundary of $-M'$ with the product 
\[
	-Z_r'\ := \ 
	\big(\, N_0\times(-r,0]\,\big)\,\sqcup \, \big(\, N_1\times(-r,0]\,\big)
\]
and consider the union 
\[
	\tilde{M} \ := \ M\cup_{N_0\sqcup N_1} (-M').
\]
Then $Z_{(-r,r)}:= Z_r\cup (-Z_r')$ is a subset of $\tilde{M}$ identified with the product 
\[
	\big(\, N_0\times(-r,r)\,\big)\,\sqcup \, \big(\, N_1\times(-r,r)\,\big).
\]
We note that $\tilde{M}$ is a complete Riemannian manifold without boundary.

Let $E_\pM=E^+_\pM\oplus E^-_\pM$ denote the restriction of $E=E^+\oplus E^-$ to $\pM$. The product structure on $E|_{Z_r}$ gives a grading-respecting isomorphism $\psi:E|_{Z_r}\to [0,r)\times E_\pM$. Recall that we identified $Z_r$ with $Z_r'$ and fixed an isomorphism between the restrictions of $E$ to $Z_r$ and $E'$ to $Z_r'$. By a slight abuse of notation we use this isomorphism to view $\psi$ also as an isomorphism $E'|_{Z'_r}\to [0,r)\times E_\pM$.

Let $\tilde{E}\to \tilde{M}$ be the vector bundle over $\tilde{M}$ obtained by gluing $E$ and $E'$ using the isomorphism $c(\tau):E|_\pM\to E'|_{\pM'}$. This means that we fix isomorphisms 
\begin{equation}\label{E:tilE=}
	\phi:\,\tilde{E}|_M\to E, \quad \phi':\,\tilde{E}|_{M'}\to E', 
\end{equation}
so that 
\[
	\begin{aligned}
	\psi\circ\phi\circ \psi^{-1} \ &= \ 
		\id: [0,r)\times E_\pM \ \to \ [0,r)\times E_\pM, \\
	\psi\circ\phi'\circ \psi^{-1} \ &= \ 
		1\times c(\tau): [0,r)\times E_\pM \ \to \ [0,r)\times E_\pM.
	\end{aligned}
\]
Note that the grading of $E$ is preserved while the grading of $E'$ is reversed in this gluing process. Therefore $\tilde E=\tilde E^+\oplus\tilde E^-$ is a $\ZZ_2$-graded bundle.

We denote by $c':T^*M'\to \End(E')$ the Clifford multiplication on $E'$ and set $c''(\xi):= -c'(\xi)$. Then $\tilde{E}$ is a Dirac bundle over $\tilde{M}$ with the Clifford multiplication
\begin{equation}\label{E:tildec}
	\tilde{c}(\xi)\ : = \ 
  \begin{cases}
  	&c(\xi), \qquad \xi\in T^*M;\\
  	&c''(\xi)=-c'(\xi), \qquad \xi\in T^*M'.
  \end{cases}
\end{equation}
One readily checks that \eqref{E:tildec} defines a smooth odd-graded Clifford multiplication on $\tilde{E}$. Let $\tilde D:C^\infty(\tilde M,\tilde E)\to C^\infty(\tilde M,\tilde E)$ be the $\ZZ_2$-graded Dirac operator. Then the isomorphism $\phi$ of \eqref{E:tilE=} identifies the restriction of $\tilde{D}^\pm$ with $D^\pm$, the isomorphism $\phi'$ identifies the restriction of $\tilde{D}^\pm$ with $-(D')^\mp$, and isomorphism $\psi\circ\phi'\circ\psi^{-1}$ identifies the restriction of $\tilde{D}^\pm$ to $-Z_r'$ with
\begin{equation}\label{E:tilDM'}
 	\tilde{D}^\pm|_{Z_r'}\ = \ - c'(\tau)\circ (D')^\pm_{Z'_r}\circ c'(\tau)^{-1}.
 \end{equation} 

Let $(\Psi')^\pm$ denote the Callias potentials of $(\DD')^\pm$, so that $(\DD')^\pm=(D')^\pm+(\Psi')^\pm$. Consider the bundle maps $\tilde{\Psi}^\pm\in \Hom (\tilde{E}^\pm,\tilde E^\mp)$ whose restrictions to $M$ are equal to $\Psi^\pm$ and whose restrictions to $M'$ are equal to $-(\Psi')^\mp$. The two pieces fit well on $Z_{(-r,r)}$ by Remark \ref{R:Psi anticommutes}. To sum up the constructions presented in this subsection, we have

%-------------------------------------
\begin{lemma}\label{L:tildeD}
The operators $\tilde{\DD}^\pm :=  \tilde{D}^\pm +\tilde{\Psi}^\pm$ are strongly Callias-type operators on $\tilde{M}$, formally adjoint to each other, whose restrictions to $M$ are equal to $\DD^\pm$ and whose restrictions to $M'$ are equal to $-(D')^\mp-(\Psi')^\mp = -(\DD')^\mp$.
\end{lemma}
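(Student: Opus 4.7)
The plan is to unpack the gluing construction of Subsection~\ref{SS:glue} and verify four things in turn: (a) that $\tilde\DD^\pm$ are bona fide smooth differential operators on $\tilde M$, i.e.\ that the pieces defined on $M$ and on $-M'$ agree across the interface; (b) the identification of the restrictions to $M$ and to $M'$; (c) that Conditions (i) and (ii) of Definition~\ref{D:saCallias} are satisfied; and (d) that $\tilde\DD^+$ and $\tilde\DD^-$ are formally adjoint. The heart of the argument is (a); the remaining parts follow essentially by inspection.

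For (a), I would work on the cylindrical neighborhood $Z_{(-r,r)}$. By the compatibility hypothesis (Definition~\ref{D:essequal operators}) together with Definition~\ref{D:producDD}, the restrictions of $\DD^\pm$ and $(\DD')^\pm$ to their respective product collars are given by the same expression $c(\tau)(\partial_t + \hat\AAA)$, $c(\tau)(\partial_t + \hat\AAA^\sharp)$ as in \eqref{E:productDD}, with $\Psi$ independent of $t$. On the $-M'$ side, the normal coordinate is reversed, the Clifford multiplication acquires a minus sign by \eqref{E:tildec}, and the identification $\phi'$ is twisted by $c(\tau)$; a direct computation shows that these three sign/twist effects combine so that \eqref{E:tilDM'} extends the formula on the $M$-side smoothly across $\pM$, and similarly for the potentials (using Remark~\ref{R:Psi anticommutes}). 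This both proves smoothness and confirms that $\tilde\DD^\pm = \tilde D^\pm + \tilde\Psi^\pm$ is the sum of the $\ZZ_2$-graded Dirac operator associated to $(\tilde E, \tilde c, \tilde\nabla)$ and the odd bundle map $\tilde\Psi$.

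For (b), the restriction to $M$ is immediate from the definitions $\phi = \id$ and $\tilde\Psi^\pm|_M = \Psi^\pm$. On $M'$, since $\phi'$ is the composition with $c(\tau)$ (up to the trivializations $\psi$), it sends $\tilE^\pm|_{M'}$ to $(E')^\mp$; combined with $\tilde c = -c'$ this yields $\tilde D^\pm|_{M'} = -(D')^\mp$, and together with $\tilde\Psi^\pm|_{M'} = -(\Psi')^\mp$ gives $\tilde\DD^\pm|_{M'} = -(\DD')^\mp$. For (c), Condition (i) is equivalent via Remark~\ref{R:Psi anticommutes} to $[\tilde c(\xi), \tilde\Psi]_+ = 0$, which holds on $M$ because $\DD$ is strongly Callias-type and on $M'$ because $(\DD')$ is strongly Callias-type (the two sign flips cancel). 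Condition (ii) follows by taking $\tilde K_R := K_R \cup K'_R$, where $K_R \subset M$ and $K'_R \subset M'$ are $R$-essential supports of $\DD^+$ and $(\DD')^+$ respectively; this set is compact in $\tilde M$, and on its complement the pointwise inequality \eqref{E:strinvinfA} for $\tilde\Psi$ and $[\tilde D,\tilde\Psi]_+$ reduces to the corresponding inequality for $\Psi$ and $\Psi'$ (squares of norms are insensitive to the overall signs on $M'$).

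Finally (d): once (a) is established, $\tilde\DD^+$ and $\tilde\DD^-$ are Dirac-type operators associated to a Clifford bundle on the complete manifold-without-boundary $\tilde M$, with $\tilde\Psi$ self-adjoint, and hence formal adjointness is a standard local computation (no boundary terms appear because $\tilde M$ has no boundary). I expect the main obstacle to be the sign bookkeeping in (a): one must keep careful track of three independent sign/twist sources, namely the orientation reversal on $M'$ (which flips $\partial_t$), the sign in \eqref{E:tildec}, and the $c(\tau)$ twist implicit in the grading-reversing isomorphism $\phi'$. The construction of Subsection~\ref{SS:glue} is rigged precisely so that these three effects compose to the identity on the interface, which is what makes the glued operator smooth.
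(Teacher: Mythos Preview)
Your proposal is correct and follows exactly the approach implicit in the paper. The paper does not give a separate proof of this lemma; it is stated as a summary of the gluing construction of Subsection~\ref{SS:glue} (``To sum up the constructions presented in this subsection, we have\dots''), and the verifications you list in (a)--(d) are precisely the checks the text carries out or asserts along the way (e.g.\ \eqref{E:tilDM'} for the Dirac part, the remark that ``the two pieces fit well on $Z_{(-r,r)}$ by Remark~\ref{R:Psi anticommutes}'' for the potential, and the explicit identification of $\tilde D^\pm|_{M'}$ with $-(D')^\mp$ via $\phi'$).
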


The operator $\tilde{\DD}^+$ is a strongly Callias-type operator on a complete Riemannian manifold without boundary. Hence, \cite{Anghel90}, it is Fredholm. We again denote by $\alpha_{\rm AS}(\tilde\DD^+)$ the Atiyah--Singer integrand of $\tilde\DD^+$. It is explained in the paragraph before Theorem~\ref{T:indep of D} that the integral $\int_{\tilde M}\alpha_{\rm AS}(\tilde\DD^+)$ is well defined.

%-------------------------------------
\begin{lemma}\label{L:indtildeD}
$\ind \tilde{\DD}^+=\int_{\tilde M}\alpha_{\rm AS}(\tilde\DD^+)$. 
\end{lemma}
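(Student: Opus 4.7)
The plan is to combine the McKean--Singer formula with the local index theorem, exploiting the product structure and the Callias potential at infinity to control all limits.

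First, I will exploit that $\tilde\DD$ is a self-adjoint strongly Callias-type operator on the complete manifold $\tilde M$ without boundary. Condition (ii) of Definition~\ref{D:saCallias} forces the spectrum of $\tilde\DD$ to be discrete (standard Callias-type analysis), and Weyl-type eigenvalue estimates then imply that $e^{-t\tilde\DD^2}$ is trace class for every $t>0$, with a smooth pointwise supertrace $\mathrm{str}\,k_t(x,x)$. The McKean--Singer identity gives
\[
	\ind\tilde\DD^+ \ = \ \mathrm{Str}\bigl(e^{-t\tilde\DD^2}\bigr) \ = \ \int_{\tilde M}\mathrm{str}\,k_t(x,x)\,d\mathrm{vol}(x),
\]
and the right-hand side is independent of $t>0$.

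Second, I will pass to the limit $t\to 0^+$. The local index theorem is a pointwise statement depending only on the symbol and local geometry, so
\[
	\lim_{t\to 0^+}\mathrm{str}\,k_t(x,x) \ = \ \AS(\tilde\DD^+)(x)
\]
uniformly on compact subsets of $\tilde M$. Outside a large compact set $L\supset K\cup K'$, all structures on $\tilde M$ are product in the cylindrical coordinate $t$, so $\AS(\tilde\DD^+)$ already vanishes there, and it remains only to dominate $\mathrm{str}\,k_t(x,x)$ uniformly in $t\in(0,1]$ on the cylindrical end. Since $\tilde\Psi^2-\bigl|[\tilde D,\tilde\Psi]_+\bigr|\to\infty$ at infinity, Agmon-type estimates yield Gaussian decay of the heat kernel off the essential support, and, complementarily, a Duhamel expansion around the exact cylindrical model operator (whose supertrace on the product end vanishes identically by the $\tilde c(\tau)$-symmetry exchanging $\tilE^+$ and $\tilE^-$) shows that the residual supertrace decays uniformly in $t$. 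Dominated convergence then gives $\ind\tilde\DD^+=\int_{\tilde M}\AS(\tilde\DD^+)$.

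The hard part will be the last step: obtaining a $t$-uniform dominating function for $\mathrm{str}\,k_t(x,x)$ on the cylindrical end as $t\to 0^+$. Standard on-diagonal heat kernel bounds blow up like $t^{-\dim M/2}$, so one cannot bound the supertrace naively. The trick is to split $\tilde\DD^2$ into its cylindrical model plus a compactly supported perturbation, apply McKean--Singer directly to the model piece (whose local supertrace vanishes pointwise by the product Clifford-module symmetry), and then iterate a Duhamel formula to show that the perturbative correction contributes only through integrals over the compact piece. This reduces the $t\to 0^+$ analysis to a compact region where the standard local index theorem applies without any non-compactness issue.
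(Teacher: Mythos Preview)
Your approach is a natural one but has a genuine gap at the outset: the claim that $e^{-t\tilde\DD^2}$ is trace class does not follow from the strongly Callias condition alone. Definition~\ref{D:saCallias} only requires $\Psi^2-|[D,\Psi]_+|\to\infty$ with no prescribed rate, so by analogy with Schr\"odinger operators $-\Delta+V$ with slowly growing confining potentials, the eigenvalue counting function may grow too fast for $\sum_j e^{-t\lambda_j}$ to converge at small $t$. The ``Weyl-type eigenvalue estimates'' you invoke are therefore not available in this generality, and without them the McKean--Singer identity in the form you write it is not justified. One might try to rescue the argument by working with the supertrace density directly, or via a relative trace against the cylindrical model (which is closer in spirit to your Duhamel step), but either route requires substantially more work than you indicate and is not carried out in the paper. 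The $t\to0^+$ interchange you flag as ``the hard part'' is likewise delicate: your Duhamel outline is plausible, but it presupposes uniform Gaussian off-diagonal heat-kernel bounds on the noncompact end that are not established here and again depend on the unspecified geometry of $N$.

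The paper avoids all heat-kernel analysis on $\tilde M$. It observes that outside a compact set $\tilde K$ the manifold $\tilde M$ is $S^1\times N$ with $\tilde\DD$ in product form, cuts along the \emph{compact} hypersurface $\Sigma=\partial\tilde K=S^1\times L$, and applies the Splitting Theorem~\ref{T:splitting} together with the vanishing result \cite[Corollary~5.13]{BrShi17} on the exterior piece (which has empty essential support). On the compact piece $\tilde K$ the classical Atiyah--Patodi--Singer theorem gives $\ind\hat\DD^+_{\tilde K}=\int_{\tilde K}\AS-\tfrac12\eta(0)$. The remaining boundary $\eta$-invariant is then shown to vanish by an explicit symmetry: the reflection $\theta\mapsto-\theta$ on the $S^1$ factor produces a unitary $\Theta=-\tilde c(\omega)\tilde c(d\theta)\circ(\theta\mapsto-\theta)$ anticommuting with the boundary operator $\hat\AAA^+_\Sigma$, forcing its spectrum to be symmetric about zero. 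This reduction-to-compact strategy is both shorter and more robust, requiring no control of traces or heat kernels on noncompact manifolds.
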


%--
\begin{proof}
Since $\tilde{M}$ is a union of two essentially cylindrical manifolds, there exists a compact essential support $\tilde{K}\subset \tilde{M}$ of $\tilde{\DD}$ such that $\tilde{M}\setminus\tilde{K}$ is of the form $S^1\times N$. We can choose  $\tilde{K}$ to be large enough so that the restriction of $\tilde{\DD}$ to a neighborhood $W$ of  $\tilde{M}\setminus\tilde{K}\simeq S^1\times N$ is a product of an operator on $N$ and an operator on $S^1$. Then the restriction of $\AS$ to this neighborhood vanishes. We can also assume that $\tilde{K}$ has a smooth boundary $\Sigma=S^1\times L$. 

Let $\hat{\DD}^+$ be a compact perturbation of $\tilde{\DD}^+$ (cf. Subsection \ref{SS:index ac}) in $W$ which is product both near $\Sigma$ and on $W$ and whose essential support is contained in $\tilde{K}$. Then 
\[
	\ind\tilde{\DD}^+ \ = \ \ind\hat{\DD}^+.
\]
We cut $\tilM$ along $\Sigma$ and apply the Splitting Theorem~\ref{T:splitting}\footnote{Since $\Sigma$ is compact we can also use the splitting theorem  for compact hypersurfaces,   \cite[Theorem 8.17]{BaerBallmann12}.} to get
\begin{equation}\label{E:tilD splits}
	\ind\tilde\DD^+
	\;=\;
	\ind\hat\DD^+_{\tilde K}\;+\;\ind\hat\DD^+_{\tilM\setminus\tilde K},
\end{equation}
where $\ind\hat\DD^+_{\tilde K}$ stands for the index of the restriction of $\hat\DD^+$ to $\tilde K$ with APS boundary condition, and $\ind\hat\DD^+_{\tilM\setminus\tilde K}$ stands for the index of the restriction of $\hat\DD^+$ to $\tilM\setminus\tilde K$ with the dual APS boundary condition.

Since $\hat\DD^+$ has an empty essential support in $\tilM\setminus\tilde K$, by the vanishing theorem \cite[Corollary 5.13]{BrShi17}, the second summand in the right hand side of \eqref{E:tilD splits} vanishes. The first summand in the right hand side of \eqref{E:tilD splits} is given by the Atiyah--Patodi--Singer index theorem \cite[Theorem 3.10]{APS1} (Note that  $\Sigma$ is outside of an essential support of $\hat\DD^+$ and, hence, the restriction of $\hat\DD^+$ to $\Sigma$ is invertible. Hence, the kernel of the restriction of $\hat\DD^+$ to $\Sigma$ is trivial) 
\[
	\ind\hat\DD^+_{\tilde K}\;=\;
	\int_{\tilde K}\alpha_{\rm AS}(\hat\DD^+)\;-\;\frac{1}{2}\eta(0),
\]
where $\eta(0)$ is the $\eta$-invariant of the restriction of $\hat\DD^+$ to $\Sigma$. 

As $\AS(\hat\DD^+)= \AS(\tilde\DD^+)\equiv 0$ on $W$ and $\hat\DD^+\equiv\tilde\DD^+$ elsewhere, we have  
\[
	\int_{\tilde K}\alpha_{\rm AS}(\hat\DD^+) \ = \ 
	\int_{\tilde K}\alpha_{\rm AS}(\tilde\DD^+) \ = \ 
	\int_{\tilM}\alpha_{\rm AS}(\tilde\DD^+).
\]
To finish the proof of the lemma it suffices now to show $\eta(0)=0$.

Let $\omega$ be the inward (with respect to $\tilde K$) unit normal one-form along $\Sigma$. Recall that $\Sigma=S^1\times L$. We denote the coordinate along $S^1$ by $\theta$. Suppose that $\{\omega,d\theta,e_1,\cdots,e_m\}$ forms a local orthonormal frame of $T^*\tilM$ on $\Sigma$. Then the restriction of $\hat\DD^+=\hat D^++\hat\Psi^+$ to $\Sigma$ can be written as
\[
\hat\AAA^+_\Sigma\;=\;-\sum_{i=1}^m\tilde c(\omega)\tilde c(e_i)\nabla_{e_i}^{\tilde E}-\tilde c(\omega)\tilde c(d\theta)\partial_\theta-\tilde c(\omega)\hat\Psi^+
\]
which maps $C^\infty(\Sigma,\tilde E^+|_\Sigma)$ to itself. We define a unitary isomorphism $\Theta$ on the space $C^\infty(\Sigma,\tilde E|_\Sigma)$ given by
\[
\Theta u(\theta,y)\;:=\;-\tilde c(\omega)\tilde c(d\theta)u(-\theta,y).
\]
One can check that $\Theta$ anticommutes with $\hat\AAA^+_\Sigma$. As a result, the spectrum of $\hat\AAA^+_\Sigma$ is symmetric about 0. Therefore $\eta(0)=0$ and lemma is proved.
\end{proof}

%---------------------------------------------
\subsection{Proof of Theorem~\ref{T:indep of D}}\label{SS:prindep of D}

Recall that we denote by $B_0$ and $B_1$ the APS boundary conditions for $\DD^+=\tilde{\DD}^+|_{M}$ at $N_0$ and $N_1$ respectively. Let $(\DD'')^{+}$ denote the restriction of $\tilde\DD^+$ to $-M'=\tilde{M}\setminus M$. Let $\oB_0$ and $\oB_1$ be the dual APS boundary conditions for $(\DD'')^{+}$ at $N_0$ and $N_1$ respectively. By the Splitting Theorem~\ref{T:splitting},
\begin{equation}\label{E:tilD=D+D'}\notag
	\ind\tilde{\DD}^+ \ = \ 
	\ind \DD^+_{B_0\oplus B_1}\ + \ \ind (\DD'')^{+}_{\oB_0\oplus \oB_1}.
\end{equation}
By Lemma~\ref{L:indtildeD}, we obtain 
\[
	\ind \DD^+_{B_0\oplus B_1}\ + \ \ind (\DD'')^{+}_{\oB_0\oplus \oB_1}
	\ =\ \int_M\alpha_{\rm AS}(\DD^+)\ +\ 
	\int_{M'}\alpha_{\rm AS}((\DD'')^{+}),
\]
which means
\begin{equation}\label{E:D=-D''}
	\ind \DD^+_{B_0\oplus B_1}\ - \ \int_M\alpha_{\rm AS}(\DD^+)
	\ =\ 
	-\ind (\DD'')^{+}_{\oB_0\oplus \oB_1}\ +\ 
	\int_{M'}\alpha_{\rm AS}((\DD'')^{+}).
\end{equation}

By Lemma~\ref{L:tildeD}, $(\DD'')^+= -(\DD')^-$. Thus $\oB_0\oplus\oB_1$ is the adjoint of the APS boundary condition for $(-\DD')^+$ (cf. Definition \ref{D:gAPS}). Therefore,
\[
	\ind (\DD'')^{+}_{\oB_0\oplus \oB_1}
	\ = \ \ind (-\DD')^-_{\oB_0\oplus \oB_1}
	\ = \ -\ind (-\DD')^{+}_{B_0\oplus B_1} \ = \ 
	-\, \ind (\DD')^{+}_{B_0\oplus B_1},
\]
where we used \eqref{E:indD-indD*} in the middle equality. Also by the construction of local index density,
\[
	\alpha_{\rm AS}((\DD'')^{+})\ =\ \alpha_{\rm AS}((-\DD')^-)
	\ =\ 
	\alpha_{\rm AS}((\DD')^{-})
	\ = \ -\alpha_{\rm AS}((\DD')^{+}).
\]
Combining these equalities with \eqref{E:D=-D''} we obtain \eqref{E:indep of D}. \hfill$\square$

%------------------------------------------------------
%------------------------------------------------------
\section{The relative $\eta$-invariant}\label{S:releta}

In the previous section we proved that  on an essentially cylindrical manifold $M$ the difference $\ind \DD_{B_0\oplus B_1}- \int_M \AS(\DD)$  depends only on the restriction of $\DD$ to the boundary, i.e., on the  operators $\AAA_0$ and $-\AAA_1$. In this section we use this fact to define the {\em relative $\eta$-invariant} $\eta(\AAA_1,\AAA_0)$ and show that it has properties similar to the difference of $\eta$-invariants $\eta(\AAA_1)-\eta(\AAA_0)$ of operators on compact manifolds. For special cases, \cite{FoxHaskell05}, when the index can be computed using heat kernel asymptotics, we show that  $\eta(\AAA_1,\AAA_0)$ is indeed equal to the difference of the $\eta$-invariants of $\AAA_1$ and $\AAA_0$. In the next section we discuss the connection between the relative $\eta$-invariant and the spectral flow.

In the case when $\AAA_0, \AAA_1$ are operators on even-dimensional manifolds, an analogous construction  was proposed in \cite[\S8]{BrShi17}. Even though the definition of the relative $\eta$-invariant  for operators on odd-dimensional manifolds proposed in this section is slightly more involved than the definition in \cite{BrShi17}, we show that most of the properties of $\eta(\AAA_1,\AAA_0)$ remain the same. 

%---------------------------------------------
\subsection{Almost compact cobordisms}\label{SS:almost compact cobordism}
Let $N_0$ and $N_1$ be two complete {\em odd-dimensional} Riemannian manifolds and let $\AAA_0$ and $\AAA_1$ be self-adjoint strongly Callias-type operators on $N_0$ and $N_1$ respectively, cf. Definition~\ref{D:saCallias}.

%-----------------------------------------
\begin{definition}\label{D:almost compact cobordism}
An {\em almost compact cobordism} between $\AAA_0$ and $\AAA_1$ is a pair $(M,\DD)$, where $M$ is an essentially cylindrical manifold with $\pM=N_0\sqcup N_1$ and $\DD$ is a graded self-adjoint strongly Callias-type operator on $M$ such that 
\begin{enumerate}
\item $M$ is an almost compact essential support of $\DD$;
\item $\DD$ is product near $\pM$;
\item 	The restriction of $\DD^+$ to $N_0$ is equal to $\AAA_0$ and the restriction of $\DD^+$ to $N_1$ is equal to $-\AAA_1$.
\end{enumerate}
If there exists an almost compact cobordism between $\AAA_0$ and $\AAA_1$ we say that operator $\AAA_0$ is {\em cobordant} to operator $\AAA_1$.
\end{definition}

 %-----------------
\begin{lemma}\label{L:antisymmetry}
An almost compact cobordism is an equivalence relation on the set of self-adjoint strongly Callias-type operators, i.e.,
\begin{enumerate}
\item \ If\/ $\AAA_0$ is cobordant to $\AAA_1$ then $\AAA_1$ is cobordant to $\AAA_0$. 

\item \ Let $\AAA_0,\AAA_1$ and $\AAA_2$ be self-adjoint strongly Callias-type operators on odd-dimensional complete Riemannian manifolds $N_0, N_1$ and $N_2$ respectively. 
Suppose $\AAA_0$ is cobordant to $\AAA_1$ and $\AAA_1$ is cobordant to $\AAA_2$. Then $\AAA_0$ is cobordant to $\AAA_2$.
\end{enumerate}
\end{lemma}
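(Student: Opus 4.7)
For part (i), I will construct the reverse cobordism by reversing the orientation of $M$. Given an almost compact cobordism $(M,\DD)$ from $\AAA_0$ to $\AAA_1$, let $M'$ denote $M$ equipped with the opposite orientation. Reversing orientation flips the chirality operator of the Dirac bundle $E$ and swaps its $\ZZ_2$-grading, so that the new positive bundle is $(E')^+=E^-$ and the new positive part of the operator is $(\DD')^+=\DD^-$. Near each boundary component $N_i$, the restriction of $\DD^-$ is $\AAA_i^\sharp=c(\tau_i)\AAA_i c(\tau_i)$ acting on $E^-|_{N_i}$. The Clifford multiplication $c(\tau_i)\colon E^+|_{N_i}\to E^-|_{N_i}$ provides a canonical identification between the original and the new boundary Dirac bundles, under which $\AAA_i^\sharp$ corresponds to $-\AAA_i$; this follows from the computation $c(\tau_i)\AAA_i c(\tau_i)^{-1}=-c(\tau_i)\AAA_i c(\tau_i)=-\AAA_i^\sharp$, using $c(\tau_i)^{-1}=-c(\tau_i)$. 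Relabeling boundary components as $N_0':=N_1$ and $N_1':=N_0$, the restrictions of $(\DD')^+$ become $\AAA_1$ at $N_0'$ (coming from $-\AAA_1^\sharp$) and $-\AAA_0$ at $N_1'$ (coming from $\AAA_0^\sharp$), yielding the required reverse cobordism. The essentially cylindrical structure, almost compact essential support, and product-near-boundary conditions all carry over from $(M,\DD)$.

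For part (ii), I will glue the two cobordisms along the common boundary $N_1$. Given almost compact cobordisms $(M_{01},\DD_{01})$ from $\AAA_0$ to $\AAA_1$ and $(M_{12},\DD_{12})$ from $\AAA_1$ to $\AAA_2$, form $M:=M_{01}\cup_{N_1}M_{12}$. Parametrize a tubular neighborhood of $N_1$ in $M$ by $(-r,r)\times N_1$, with the $M_{01}$-side at $t>0$ and the $M_{12}$-side at $t<0$. Because $\DD_{01}$ is product near $N_1$ with restriction $-\AAA_1$ (at the right boundary of $M_{01}$) and $\DD_{12}$ is product near $N_1$ with restriction $\AAA_1$ (at the left boundary of $M_{12}$), both operators take the identical product form $c(dt)(\partial_t-\AAA_1)$ in this tubular neighborhood, and they glue to a smooth strongly Callias-type operator $\DD$ on $M$. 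The operator $\DD$ is product near $\partial M=N_0\sqcup N_2$ with restrictions $\AAA_0$ at $N_0$ and $-\AAA_2$ at $N_2$ inherited from the two pieces, and the union of the almost compact essential supports of $\DD_{01}$ and $\DD_{12}$ provides an almost compact essential support of $\DD$.

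To complete the proof of (ii), it remains to verify that $M$ is itself essentially cylindrical. Each $M_{ij}$ is essentially cylindrical with a compact core $K_{ij}$ whose cylindrical cross-section along $N_1$ is diffeomorphic to $N_1\setminus(K_{ij}\cap N_1)$. After enlarging $K_{01}$ and $K_{12}$ so that their intersections with $N_1$ coincide, the two cylindrical cross-sections along $N_1$ match up and, together with the outer cylindrical pieces of the two cobordisms, endow $M$ with an essentially cylindrical structure. The main technical obstacle lies in part (i): one must track carefully how the asymmetric sign convention in condition (iii) of Definition~\ref{D:almost compact cobordism} transforms under orientation reversal, and how the identification by $c(\tau_i)$ interacts with the Clifford relations on $T^*N_i$; the crucial sign flip that turns $(\AAA_0,-\AAA_1)$ into $(-\AAA_0,\AAA_1)$ ultimately comes from the identity $c(\tau_i)^2=-1$.
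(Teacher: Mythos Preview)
Your proposal is correct and follows essentially the same approach as the paper: the paper's own proof simply cites Lemmas~8.2 and 8.3 of \cite{BrShi17}, but the underlying constructions---orientation reversal for (i) and gluing along $N_1$ for (ii)---are exactly the ones you carry out, and they reappear explicitly in the paper in Section~\ref{SS:glue} and in the proofs of Propositions~\ref{P:antisymmetry eta} and \ref{P:cocycle}. One small point of exposition: when you write that ``the restriction of $\DD^-$ is $\AAA_i^\sharp$'' at $N_i$, note that at $N_1$ the restriction of $\DD^+$ is $-\AAA_1$, so the restriction of $\DD^-$ is $(-\AAA_1)^\sharp=-\AAA_1^\sharp$; your subsequent parenthetical ``coming from $-\AAA_1^\sharp$'' shows you have this right, but the sentence as written is momentarily misleading.
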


\begin{proof}
The proof is a verbatim repetition of the proof of Lemmas~8.2 and 8.3 of \cite{BrShi17}. 
\end{proof}

%-----------------------------------------
\begin{definition}\label{D:releta}
Suppose $\AAA_0$ and $\AAA_1$ are cobordant self-adjoint strongly Callias-type operators and let $(M,\DD)$ be an almost compact cobordism between them. Let $B_0= H_{(-\infty,0)}^{1/2}(\AAA_0)$ and $B_1= H_{(-\infty,0)}^{1/2}(-\AAA_1)$ be the APS boundary conditions for $\DD^+$. The {\em relative $\eta$-invariant} is defined as 
\begin{equation}\label{E:releta}
	\eta(\AAA_1,\AAA_0) \ = \  2\,\left(\,\ind \DD^+_{B_0\oplus B_1}
	- \int_M\,\AS(\DD^+)\,\right)
	 \ + \ \dim\ker \AAA_0\ + \ \dim\ker \AAA_1.
\end{equation}
\end{definition}
Theorem~\ref{T:indep of D}  implies that  $\eta(\AAA_1,\AAA_0)$ is independent of the choice of the cobordism $(M,\DD)$. 

%-------------------------------------
\begin{remark}\label{R:rel eta}
Sometimes it is  convenient to use the dual APS boundary conditions $\oB_0= H_{(-\infty,0]}^{1/2}(\AAA_0)$ and $\oB_2= H_{(-\infty,0]}^{1/2}(-\AAA_1)$ instead of $B_0$ and $B_1$. It follows from Corollary \ref{C:change of B} that the relative $\eta$-invariant can be written as
\begin{equation}\label{E:B1'B2'}
		\eta(\AAA_1,\AAA_0) \ = \  
		2\,\left(\,\ind \DD^+_{\oB_0\oplus \oB_1}- \int_M\,\AS(\DD^+)\,\right)
	 \ - \ \dim\ker \AAA_0\ - \ \dim\ker \AAA_1.
\end{equation}
\end{remark}

%---------------------------------------------
\subsection{The case when the heat kernel has an asymptotic expansion}\label{SS:FoxHaskell}
In \cite{FoxHaskell05}, Fox and Haskell studied the index of a boundary value problem on manifolds of bounded geometry. They showed that under certain conditions (satisfied for natural operators on manifolds with conical or cylindrical ends) on $M$ and $\DD$,  the heat kernel $e^{-t(\DD_B)^*\DD_B}$ is of trace class and its trace has an asymptotic expansion similar to the one on compact manifolds. In this case the $\eta$-function, defined by a usual formula 
\[
		\eta(s;\AAA) \ := \ \sum_{\lambda\in{\rm spec}(\AAA)} 
		{\rm sign}(\lambda)\,|\lambda|^s, 
		\qquad \operatorname{Re} s \ll 0,
\] 
is an analytic function of $s$, which has a meromorphic continuation to the whole complex plane and is regular at 0. So one can define the $\eta$-invariant of $\AAA$ by $\eta(\AAA)= \eta(0;\AAA)$.

%-------------------------------------
\begin{proposition}\label{P:FoxHaskell}
Suppose now that $\DD$ is an operator on an essentially cylindrical manifold $M$ which satisfies the conditions of\, \cite{FoxHaskell05}. We also assume that $\DD$ is product near $\pM=N_0\sqcup N_1$ and that $M$ is an almost compact essential support of $\DD$. Let $\AAA_0$ and $-\AAA_1$ be the restrictions of $\DD^+$ to $N_0$ and $N_1$ respectively. Let $\eta(\AAA_j)$ ($j=0,1$) be the $\eta$-invariant of $\AAA_j$.   Then 
\begin{equation}\label{E:FoxHaskell}
	\eta(\AAA_1,\AAA_0) \ = \ \eta(\AAA_1)\ - \ \eta(\AAA_0).
\end{equation}
\end{proposition}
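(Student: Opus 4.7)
The plan is to deduce the identity \eqref{E:FoxHaskell} directly from the generalized Atiyah--Patodi--Singer index theorem of Fox--Haskell \cite{FoxHaskell05}, combined with the definition \eqref{E:releta}. Under the standing assumption that $\DD$ satisfies the Fox--Haskell hypotheses and is product near $\pM=N_0\sqcup N_1$, the boundary operators $\AAA_0$ and $-\AAA_1$ have heat kernels with the required asymptotic expansions, so that $\eta(\AAA_0)$ and $\eta(-\AAA_1)$ are well defined. Moreover, by the Fox--Haskell APS index theorem applied to the two-component boundary of $M$, we obtain
\begin{equation*}
	\ind \DD^+_{B_0\oplus B_1}
	\ =\ \int_M \AS(\DD^+)
	\ -\ \frac{\eta(\AAA_0)+\dim\ker\AAA_0}{2}
	\ -\ \frac{\eta(-\AAA_1)+\dim\ker(-\AAA_1)}{2}.
\end{equation*}

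Next I would use the elementary symmetry properties $\eta(-\AAA_1)=-\eta(\AAA_1)$ and $\dim\ker(-\AAA_1)=\dim\ker\AAA_1$, which follow immediately from the fact that the spectrum of $-\AAA_1$ is the negative of the spectrum of $\AAA_1$. Substituting and rearranging gives
\begin{equation*}
	2\left(\ind \DD^+_{B_0\oplus B_1}\ -\ \int_M \AS(\DD^+)\right)
	\ +\ \dim\ker\AAA_0\ +\ \dim\ker\AAA_1
	\ =\ \eta(\AAA_1)\ -\ \eta(\AAA_0).
\end{equation*}
The left-hand side is precisely $\eta(\AAA_1,\AAA_0)$ by Definition~\ref{D:releta}, which proves \eqref{E:FoxHaskell}.

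The only nontrivial point is the first step: verifying that Fox--Haskell's APS theorem applies to an essentially cylindrical manifold $M$ with a \emph{two}-component boundary and to a strongly Callias-type (rather than merely Dirac-type) operator. The Callias potential does not change the leading symbol, so it does not affect the small-time heat kernel asymptotics that drive their argument; thus the local index integrand is still $\AS(\DD^+)$, and the boundary contribution from each component is the usual $-\tfrac{1}{2}(\eta+\dim\ker)$ of the induced boundary operator with its natural sign. The one bookkeeping issue to watch is the sign convention for the ``right'' boundary $N_1$: because we declare the inward conormal at $N_1$ to give the restriction $-\AAA_1$ (as in Section~\ref{SS:esscyl}), the boundary term at $N_1$ appears as $-\tfrac{1}{2}(\eta(-\AAA_1)+\dim\ker(-\AAA_1))$, which then combines with $\eta(-\AAA_1)=-\eta(\AAA_1)$ to yield the $+\tfrac{1}{2}\eta(\AAA_1)$ term we want. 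Everything else is a mechanical rearrangement.
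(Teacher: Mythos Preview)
Your proof is correct and is exactly the argument the paper has in mind: the paper's own proof simply cites \cite[Proposition~8.8]{BrShi17} (the odd-dimensional case) and says it extends verbatim, and that argument is precisely the application of the Fox--Haskell APS index formula followed by $\eta(-\AAA_1)=-\eta(\AAA_1)$ and comparison with Definition~\ref{D:releta}. Your discussion of the two-component boundary sign bookkeeping and the harmlessness of the Callias potential for the local index density is a useful explicit check that the paper leaves implicit.
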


\begin{proof}
An analogue of this proposition for the case when $\dim M$ is odd is proven in \cite[Proposition~8.8]{BrShi17}. This proof extends to the case when $\dim M$ is even without any changes. 
\end{proof}

%---------------------------------------------
\subsection{Basic properties of the relative $\eta$-invariant}\label{SS:properties eta}

Proposition~\ref{P:FoxHaskell} shows that under certain conditions the $\eta$-invariants of $\AAA_0$ and $\AAA_1$ are defined and $\eta(\AAA_1,\AAA_0)$ is their difference. We now show that in general case, when $\eta(\AAA_0)$ and $\eta(\AAA_1)$ do not necessarily exist,  $\eta(\AAA_1,\AAA_0)$ behaves like it were a difference of an invariant of $N_1$ and an invariant of $N_0$.

%-------------------------------------
\begin{proposition}[Antisymmetry]\label{P:antisymmetry eta}	
Suppose $\AAA_0$ and $\AAA_1$ are cobordant self-adjoint strongly Callias-type operators. Then 
\begin{equation}\label{E:antisymmetry}
	\eta(\AAA_0,\AAA_1)\ = \ -\,\eta(\AAA_1,\AAA_0).
\end{equation}
\end{proposition}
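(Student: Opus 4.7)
The plan is to produce, from an almost compact cobordism $(M,\DD)$ realizing $\eta(\AAA_1,\AAA_0)$, a second almost compact cobordism $(M',\DD')$ realizing $\eta(\AAA_0,\AAA_1)$, and then to compare the two index formulas.  For the construction I would take $M'$ to be $M$ with reversed orientation and with its boundary components relabeled as $N_0':=N_1$, $N_1':=N_0$; under orientation reversal the grading induced by the volume form is swapped, so $(E')^\pm = E^\mp$ and the graded ``$+$''-component is $(\DD')^+ = \DD^-$.  With a suitable identification of $(E')^+|_{\pM}$ with $E^+|_{\pM}$ via $c(\tau)$, the restrictions of $(\DD')^+$ to the new $N_0'$ and $N_1'$ are identified with $\AAA_1$ and $-\AAA_0$ respectively, so that $(M',\DD')$ qualifies as an almost compact cobordism between $\AAA_1$ and $\AAA_0$ in the sense of Definition~\ref{D:almost compact cobordism}.

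Two ingredients would then drive the computation.  First, the Atiyah--Singer integrand satisfies $\AS(\DD^-)=-\AS(\DD^+)$ as differential forms, because the relative Chern character $\operatorname{ch}(E/S)$ changes sign when the $\ZZ_2$-grading of $E$ is reversed; combined with the sign change in integration produced by orientation reversal and relabeling, this yields
\[
	\int_{M'}\AS((\DD')^+)\ = \ -\int_M \AS(\DD^+).
\]
Second, by \eqref{E:dual} the operator $\DD^-$ with the dual APS boundary conditions is the $L^2$-adjoint of $\DD^+$ with the APS boundary conditions, and by \eqref{E:indD-indD*} their indices differ by a sign.  Converting between APS and dual APS boundary conditions at $N_0$ and $N_1$ using Corollary~\ref{C:change of B} then gives
\[
	\ind(\DD')^+_{B_0'\oplus B_1'}
	\ = \
	-\,\ind\DD^+_{B_0\oplus B_1}
	\ - \ \dim\ker\AAA_0\ -\ \dim\ker\AAA_1.
\]
Substituting these two identities into the defining formula \eqref{E:releta} (and using $\dim\ker(\pm\AAA_j)=\dim\ker\AAA_j$) one obtains
\begin{align*}
	\eta(\AAA_0,\AAA_1)
	\ &= \ 2\bigl(\ind(\DD')^+_{B_0'\oplus B_1'}-\int_{M'}\AS((\DD')^+)\bigr)
			+ \dim\ker\AAA_1 + \dim\ker\AAA_0 \\
	&=\ -2\,\ind\DD^+_{B_0\oplus B_1}\ +\ 2\int_M\AS(\DD^+)
			\ -\ \dim\ker\AAA_0\ -\ \dim\ker\AAA_1\\
	&=\ -\eta(\AAA_1,\AAA_0).
\end{align*}

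The main obstacle I anticipate is the geometric construction step for $(M',\DD')$: one must verify that, after orientation reversal and relabeling, the boundary restrictions of $(\DD')^+$ can be identified with the original operators $\AAA_1$ and $-\AAA_0$ rather than only with the $c(\tau)$-conjugates $\AAA_j^\sharp$ that appear a priori on $E^\mp|_{N_j}$, and that all product-structure conditions near $\pM$ are preserved so that the generalized APS framework of Section~\ref{S:setting} applies verbatim.  In the odd-dimensional setting this issue was handled in \cite[Lemmas~8.2 and~8.3]{BrShi17} by replacing $\DD$ with $-\DD$ on an appropriately modified Dirac bundle; the even-dimensional analogue must additionally accommodate the $\ZZ_2$-grading induced by the volume form.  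Once the construction is carried out, the remaining steps are a direct bookkeeping exercise using only the results recalled in Section~\ref{S:setting} and Theorem~\ref{T:indep of D}.
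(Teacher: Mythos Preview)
Your approach is essentially the paper's: reverse the orientation of $M$ to obtain an almost compact cobordism from $\AAA_1$ to $\AAA_0$, then compare the two defining formulas for the relative $\eta$-invariant. The paper packages the construction a bit differently: rather than building $(M',\DD')$ by hand, it invokes the gluing of Section~\ref{SS:glue} with $\DD'=\DD$ (forming the double $\tilde M=M\cup_{\pM}(-M)$) and quotes the identity~\eqref{E:D=-D''} already established in the proof of Theorem~\ref{T:indep of D}, which gives at once
\[
\ind(\DD'')^+_{\oB_0\oplus\oB_1}-\int_{M'}\AS((\DD'')^+)
\ =\ -\Big(\ind\DD^+_{B_0\oplus B_1}-\int_M\AS(\DD^+)\Big);
\]
plugging this into the dual-APS formula~\eqref{E:B1'B2'} finishes the proof in one line. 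Your route---deriving the index relation via \eqref{E:indD-indD*} plus Corollary~\ref{C:change of B}, and the integral relation separately---amounts to re-proving~\eqref{E:D=-D''} from scratch. This is fine, and the anticipated issue about identifying boundary restrictions with $\AAA_j$ rather than $\AAA_j^\sharp$ is exactly what the $c(\tau)$-gluing in Section~\ref{SS:glue} handles.

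One small slip in your justification: the relative Chern character $\operatorname{ch}(E/S)$ of \cite[\S4.1]{BeGeVe} is the Chern character of the twisting bundle $W$ in the local splitting $E^\pm\simeq S^\pm\otimes W$, and it does \emph{not} change sign when the grading of $E$ is reversed---under orientation reversal both $S^\pm$ and $E^\pm$ are swapped, so $W$ is unchanged. Thus $\AS((\DD')^+)=\AS(\DD^+)$ as differential forms, and the minus sign in $\int_{M'}\AS((\DD')^+)=-\int_M\AS(\DD^+)$ comes entirely from integrating over $M'=-M$. The identity you use is correct; only the attribution of the sign is off.
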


\begin{proof}
Let $-M$ denote the manifold $M$ with the opposite orientation and let $\tilde{M}:= M\cup_\pM(-M)$ denote the {\em double} of $M$. Let $\DD$ be an almost compact cobordism between $\AAA_0$ and $\AAA_1$. Using the construction of  Section~\ref{SS:glue} (with $\DD'= \DD$) we obtain a graded self-adjoint strongly Callias-type operator $\tilde{\DD}$ on $\tilde{M}$ whose restriction to $M$ is isometric to $\DD$. Let $\DD''$ denote the restriction of $\tilde{\DD}$ to $-M=\tilde{M}\setminus{}M$. Then the restriction of $(\DD'')^+$ to $N_1$ is equal to $\AAA_1$ and the restriction of $(\DD'')^+$ to $N_0$ is equal to $-\AAA_0$.

Let   
\[
	\begin{aligned}
	\oB_0 \ &= \ H^{1/2}_{[0,\infty)}(\AAA_0)\ = \  H^{1/2}_{(-\infty,0]}(-\AAA_0),\\
	\oB_1 \ &= \  H^{1/2}_{[0,\infty)}(-\AAA_1) \ = \ H^{1/2}_{(-\infty,0]}(\AAA_1)
	\end{aligned}
\] 
be the dual APS boundary conditions for $(\DD'')^+$.  By \eqref{E:D=-D''}, 
\begin{equation}\label{E:indD''=indD}
			 \ind (\DD'')^+_{\oB_0\oplus \oB_1}\ -\ 
			\int_{M'}\alpha_{\rm AS}((\DD'')^+)
			\ = \
			-\ind \DD^+_{B_0\oplus B_1}\ + \ \int_M\alpha_{\rm AS}(\DD^+).
\end{equation}

Since $\DD''$ is an almost compact cobordism between $\AAA_1$ and $\AAA_0$ we conclude  from \eqref{E:B1'B2'} that
\begin{equation}\label{E:etaA0A1}
	\eta(\AAA_0,\AAA_1)\ = \ 
	2\,\left(\,
	  \ind (\DD'')^+_{\oB_0\oplus \oB_1}- 
	  	\int_{M'}\alpha_{\rm AS}((\DD'')^+)\,\right)
	\ - \ \dim\ker \AAA_0\ - \ \dim\ker \AAA_1.
\end{equation}
Combining  \eqref{E:etaA0A1} and \eqref{E:indD''=indD} we obtain \eqref{E:antisymmetry}.
\end{proof}

Note that \eqref{E:antisymmetry} implies that 
\begin{equation}\label{E:etaAA}
	\eta(\AAA,\AAA)\ = \ 0
\end{equation}
for every self-adjoint strongly Callias-type operator $\AAA$. 

%-------------------------------------
\begin{proposition}[The cocycle condition]\label{P:cocycle}
Let $\AAA_0,\AAA_1$ and $\AAA_2$ be self-adjoint strongly Callias-type operators which are cobordant to each other. Then 
\begin{equation}\label{E:cocycle}
	\eta(\AAA_2,\AAA_0)\ = \ 
	\eta(\AAA_2,\AAA_1)\ + \ \eta(\AAA_1,\AAA_0).
\end{equation}
\end{proposition}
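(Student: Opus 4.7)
The plan is to realize $\eta(\AAA_2,\AAA_0)$ by gluing two chosen almost compact cobordisms along $N_1$, and to derive the cocycle identity by applying the Splitting Theorem at that interface.

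Concretely, first I would fix almost compact cobordisms $(M_{01},\DD_{01})$ between $\AAA_0$ and $\AAA_1$ and $(M_{12},\DD_{12})$ between $\AAA_1$ and $\AAA_2$, both product near all boundary components. Following the construction used in Lemma~\ref{L:antisymmetry}(ii), I glue them along their common boundary $N_1$ (where the restrictions of $\DD_{01}^+$ and $\DD_{12}^+$ are $-\AAA_1$ and $\AAA_1$ respectively, so the Clifford and Callias structures match up) to produce an almost compact cobordism $(M_{02},\DD_{02})$ between $\AAA_0$ and $\AAA_2$; by Definition~\ref{D:releta} this manifold can be used to compute $\eta(\AAA_2,\AAA_0)$. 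I would also note the additivity $\int_{M_{02}}\AS(\DD_{02}^+)=\int_{M_{01}}\AS(\DD_{01}^+)+\int_{M_{12}}\AS(\DD_{12}^+)$, which is immediate since the Atiyah--Singer integrand is a locally defined differential form.

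Next I would apply the Splitting Theorem~\ref{T:splitting} to $M_{02}$ cut along $N_1$. The theorem forces APS on one side of the cut and dual APS on the other. Putting APS on the $M_{12}$ side gives the APS condition $B_1^{(12)}=H^{1/2}_{(-\infty,0)}(\AAA_1)$ used in the definition of $\eta(\AAA_2,\AAA_1)$, while on the $M_{01}$ side it imposes the dual APS condition $\bar B_1^{(01)}=H^{1/2}_{[0,\infty)}(\AAA_1)$. This last condition is a finite-rank perturbation of the APS condition $B_1^{(01)}=H^{1/2}_{(0,\infty)}(\AAA_1)$ needed for $\eta(\AAA_1,\AAA_0)$, and Corollary~\ref{C:change of B} (applied at the $N_1$ component only) yields
\[
	\ind\DD_{01,B_0\oplus\bar B_1^{(01)}}^{+}
	\;=\;\ind\DD_{01,B_0\oplus B_1^{(01)}}^{+}\;+\;\dim\ker\AAA_1.
\]
Combining the splitting identity, this correction, and additivity of $\AS$, I obtain
\[
	\ind\DD_{02,B_0\oplus B_2}^{+}-\int_{M_{02}}\AS(\DD_{02}^+)
	\;=\;\Big[\ind\DD_{01,B_0\oplus B_1^{(01)}}^{+}-\!\int_{M_{01}}\!\AS(\DD_{01}^+)\Big]
	+\Big[\ind\DD_{12,B_1^{(12)}\oplus B_2}^{+}-\!\int_{M_{12}}\!\AS(\DD_{12}^+)\Big]
	+\dim\ker\AAA_1.
\]
Multiplying by $2$ and substituting formula \eqref{E:releta} for each of the three relative $\eta$-invariants gives \eqref{E:cocycle}, since the extra $2\dim\ker\AAA_1$ on the right exactly cancels the two copies of $\dim\ker\AAA_1$ that appear in the kernel correction terms of $\eta(\AAA_1,\AAA_0)$ and $\eta(\AAA_2,\AAA_1)$ but are absent from $\eta(\AAA_2,\AAA_0)$.

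The only subtle step is step~three: one must be careful about which side of the cut carries APS and which carries dual APS, and track the sign conventions that turn the restrictions of $\DD_{01}^+$ and $\DD_{12}^+$ at $N_1$ into $-\AAA_1$ and $\AAA_1$. This is precisely where the $\dim\ker\AAA_1$ correction enters, and verifying that it precisely matches the kernel terms built into Definition~\ref{D:releta} is the main bookkeeping obstacle; everything else is a direct consequence of the Splitting Theorem, Corollary~\ref{C:change of B}, and the additivity of the Atiyah--Singer integrand.
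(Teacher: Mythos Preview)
Your proposal is correct and follows essentially the same approach as the paper: glue two cobordisms along $N_1$, apply the Splitting Theorem~\ref{T:splitting}, use Corollary~\ref{C:change of B} to convert the dual APS condition at the cut into APS plus a $\dim\ker\AAA_1$ correction, and invoke additivity of the Atiyah--Singer integrand. The only cosmetic difference is that the paper places the APS condition on the $M_{01}$ side and the dual APS (hence the kernel correction) on the $M_{12}$ side, whereas you do the opposite; either choice works and the bookkeeping you describe is accurate.
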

\begin{proof}
Let $M_1$ and $M_2$ be essentially  cylindrical manifolds such that $\pM_1= N_0\sqcup N_1$ and $\pM_2= N_1\sqcup N_2$. Let $\DD_1$ be an operator on $M_1$ which is an almost compact cobordism between $\AAA_0$ and $\AAA_1$. Let $\DD_2$ be an operator on $M_2$ which is an almost compact cobordism between $\AAA_1$ and $\AAA_2$. Then the operator $\DD_3$ on $M_1\cup_{N_1} M_2$ whose restriction to $M_j$ ($j=1,2$) is equal to $\DD_j$ is an almost compact cobordism between $\AAA_0$ and $\AAA_2$. 

Let $B_0$ and $B_1$ be the APS boundary conditions for $\DD_1^+$ at $N_0$ and $N_1$ respectively. Then $\oB_1= H^{1/2}_{[0,\infty)}(\AAA_1)$ is equal to the dual APS boundary condition for $\DD_2^+$. Let $B_2$ be the APS boundary condition for $\DD_2^+$ at $N_2$. From Corollary \ref{C:change of B} we obtain 
\begin{equation}\label{E:indBadB}
			\eta(\AAA_2,\AAA_1) \ = \  
		2\,\left(\,\ind (\DD_2^+)_{\oB_1\oplus B_2}- \int_M\,\AS(\DD_2^+)\,\right)
	 \ - \ \dim\ker \AAA_1\ + \ \dim\ker \AAA_2.
\end{equation}
By the Splitting Theorem~\ref{T:splitting}
\begin{equation}\label{E:splitB0B1B3}
	\ind (\DD_3^+)_{B_0\oplus B_2}\ = \ 
	\ind (\DD_1^+)_{B_0\oplus B_1} \ + \ \ind (\DD_2^+)_{\oB_1\oplus B_2}.
\end{equation}
Clearly, 
\begin{equation}\label{E:split int}
		\int_{M_1\cup M_2}\AS(\DD_3^+)\ = \ 
	\int_{M_1}\,\AS(\DD_1^+)\ + \ \int_{M_2}\,\AS(\DD_2^+).
\end{equation}
Combining \eqref{E:indBadB}, \eqref{E:splitB0B1B3}, and \eqref{E:split int} we obtain \eqref{E:cocycle}. 
\end{proof}

%------------------------------------------------------
%------------------------------------------------------
\section{The spectral flow}\label{S:sp flow}

Suppose $\AA:= \{\AAA^s\}_{0\le s\le 1}$ is a smooth family of self-adjoint elliptic operators on a closed  manifold $N$.  Let $\oeta(\AAA^s)\in \RR/\ZZ$ denote the mod $\ZZ$ reduction of the $\eta$-invariant $\eta(\AAA^s)$.  Atiyah, Patodi, and Singer, \cite{APS3}, showed that $s\mapsto \oeta(\AAA^s)$ is a smooth function whose derivative $\frac{d}{ds}\oeta(\AAA^s)$ is given by an explicit local formula. 
Further, Atiyah, Patodi and Singer, \cite{APS3}, introduced a notion of spectral flow $\spf(\AA)$ and showed that it  computes the net number of integer jumps of $\eta(\AAA^s)$, i.e., 
\[
	2\,\spf(\AA)\ = \ 
	\eta(\AAA^1)\ - \ \eta(\AAA^0)\ - \ 
	\int_0^1\,\big(\frac{d}{ds}\oeta(\AAA^s)\,\big)\, ds. 
\]
In this section we consider a family of self-adjoint strongly Callias-type operators $\AA= \{\AAA^s\}_{0\le s\le 1}$ on a complete  Riemannian manifold. Assuming that the restriction of $\AAA^s$ to a complement of a compact set $K\subset N$ is independent of  $s$, we show that for any operator $\AAA_0$ cobordant to $\AAA^0$  the  mod $\ZZ$  reduction $\oeta(\AAA^s,\AAA_0)$ of the relative $\eta$-invariant depends smoothly on $s$ and 
\[
	2\,\spf(\AA)\ = \ 
	\eta(\AAA^1,\AAA_0)- \eta(\AAA^0,\AAA_0)\ - \ 
	\int_0^1\,\big(\frac{d}{ds}\oeta(\AAA^s,\AAA_0)\,\big)\, ds. 
\]

%---------------------------------------------
\subsection{A family of boundary operators}\label{SS:familyAA}
Let $E_N\to N$ be a Dirac bundle over a complete {\em odd-dimensional} Riemannian manifold $N$. We denote the Clifford multiplication of $T^*N$ on $E_N$ by $c_N:T^*N\to \End(E_N)$. Let $\AA= \{\AAA^s\}_{0\le s\le 1}$ be a family of self-adjoint strongly Callias-type operators $\AAA^s:C^\infty(N,E_N) \to C^\infty(N,E_N)$.

%-----------------------------------------
\begin{definition}\label{D:almost constant}
The family $\AA= \{\AAA^s\}_{0\le s\le 1}$ is called {\em almost constant} if there exists a compact set $K\subset N$ such that the restriction of $\AAA^s$ to $N\setminus K$ is independent of $s$. 
\end{definition}

Consider the cylinder $M:=[0,1]\times N$  and denote by $t$ the coordinate along $[0,1]$. Set 
\[
	E^+= E^-\ := \ [0,1]\times E_N.
\]
Then $E=E^+\oplus E^-\to M$ is naturally a $\ZZ_2$-graded Dirac bundle over $M$ with 
\[
	c(dt) \ := \ \begin{pmatrix}
	0& -\id_{E_N}\\\id_{E_N}&0
	\end{pmatrix} 
\]
and 
\[
	c(\xi)\ : = \ \begin{pmatrix}
	0& c_N(\xi)\\ c_N(\xi)&0
	\end{pmatrix}, \qquad\text{for}\quad \xi\in T^*N.
\]

%-----------------------------------------
\begin{definition}\label{D:smoothD}
The family $\AA= \{\AAA^s\}_{0\le s\le 1}$ is called {\em smooth} if  
\begin{equation}\label{E:smoothD}
	\DD\ : = \ c(dt)\,\left(\, \p_t+
		\begin{pmatrix}
		\AAA^t&0\\0&-\AAA^t
		\end{pmatrix}\,\right):\ 
	C^\infty(M,E)\to C^\infty(M,E)
\end{equation}
is a smooth differential operator on $M$. 
\end{definition}

Fix a smooth non-decreasing function $\kappa:[0,1]\to [0,1]$ such that $\kappa(t)=0$ for $t\le1/3$ and $\kappa(t)=1$ for $t\ge2/3$ and consider the operator 
\begin{equation}\label{E:interpolationD}
	\DD\ : = \ c(dt)\,\left(\, \p_t+
		\begin{pmatrix}
		\AAA^{\kappa(t)}&0\\0&-\AAA^{\kappa(t)}
		\end{pmatrix}\,\right):\ C^\infty(M,E)\to C^\infty(M,E).
\end{equation}
Then $\DD$ is product near $\pM$. If $\AA$ is a smooth almost constant family of self-adjoint strongly Callias-type operators then \eqref{E:interpolationD} is a strongly Callias-type operator for which $M$ is an almost compact essential support. Hence it is a non-compact cobordism (cf. Definition~\ref{D:almost compact cobordism}) between $\AAA^0$ and $\AAA^1$. 

%---------------------------------------------
\subsection{The spectral section}\label{SS:sp section}

If\/ $\AA= \{\AAA^s\}_{0\le s\le 1}$ is a smooth almost constant family of self-adjoint strongly Callias-type operators then it satisfies the conditions of the Kato Selection Theorem \cite[Theorems~II.5.4 and II.6.8]{Kato95book}, \cite[Theorem~3.2]{Nicolaescu95}. Thus there is a family of eigenvalues $\lambda_j(s)$ ($j\in \ZZ$)  which depend continuously on $s$. We order the eigenvalues so that $\lambda_j(0)\le \lambda_{j+1}(0)$ for all $j\in \ZZ$ and $\lambda_j(0)\le 0$ for $j\le 0$ while $\lambda_j(0)>0$ for $j>0$. 

Atiyah, Patodi and Singer \cite{APS3} defined the spectral flow $\spf(\AA)$ for a family of operators satisfying the conditions of the Kato Selection Theorem  (\cite[Theorems~II.5.4 and II.6.8]{Kato95book}, \cite[Theorem~3.2]{Nicolaescu95}) as an integer that counts the net number of eigenvalues that change sign when $s$ changes from 0 to 1. 
Several other equivalent definitions of the spectral flow based on different assumptions on the family $\AA$ exist in the literature. For our purposes the most convenient is the  Dai and Zhang's definition \cite{DaiZhang98} which is based on the notion of {\em spectral section} introduced by Melrose and Piazza \cite{MelrosePiazza97}.

%-----------------------------------------
\begin{definition}\label{D:spectral section}
A {\em spectral section} for $\AA$ is a continuous family $\PP = \{P^s\}_{0\le s\le 1}$ of self-adjoint projections such that there exists a constant $R>0$ such that for all $0\le s\le 1$, if $\AAA^su= \lambda u$ then
\[
	P^su \ = \ \begin{cases} 
		0, \quad &\text{if}\quad \lambda< -R;\\ 
		u, \quad	 &\text{if}\quad \lambda> R.	
		\end{cases}
\]
\end{definition}

If $\AA$ satisfies the conditions of the Kato Selection Theorem, then the arguments of the proof of  \cite[Proposition~1]{MelrosePiazza97} show that $\AA$ admits a spectral section.

%---------------------------------------------
\subsection{The spectral flow}\label{SS:sp flow}
Let $\PP = \{P^s\}$ be a spectral section for $\AA$. Set $B^s:= \ker P^s$. Let $B_0^s:= H^{1/2}_{(-\infty,0)}(\AAA^s)$ denote the APS boundary condition defined by the boundary operator $\AAA^s$.   Since the spectrum of $\AAA^s$ is discrete, it follows immediately from the definition of the spectral section that for every $s\in[0,1]$ the space $B^s$ is a finite rank perturbation of $B_0^s$, cf. Section~\ref{SS:gen bc}. Recall that  the relative index $[B^s,B_0^s]$ was defined in Definition~\ref{D:rel index}. Following Dai and Zhang \cite{DaiZhang98} (see also \cite[\S9.8]{BrShi17}) we give the following definition.

%-----------------------------------------
\begin{definition}\label{D:sp flow}
Let $\AA= \{\AAA^s\}_{0\le s\le 1}$ be a smooth almost constant  family of self-adjoint strongly Callias-type operators which admits a spectral section $\PP = \{P^s\}_{0\le s\le 1}$. Assume that the operators $\AAA^0$ and $\AAA^1$ are invertible. Let $B^s:= \ker P^s$ and $B_0^s:= H^{1/2}_{(-\infty,0)}(\AAA^s)$. The {\em spectral flow}\/ $\spf(\AA)$ of the family $\AA$ is defined by the formula
\begin{equation}\label{E:sp flow=sp section}
	\spf(\AA)\ := \ [B^1,B_0^1] \ - \ [B^0,B^0_0].
\end{equation}
\end{definition}
By Theorem~1.4 of \cite{DaiZhang98} the spectral flow is independent of the choice of the spectral section $\PP$ and computes the net number of eigenvalues that change sign when $s$ changes from 0 to 1.

%-------------------------------------
\begin{lemma}\label{L:spflow-A}
Let $-\AA$ denote the family $\{-\AAA^s\}_{0\le s\le 1}$. Then 
\begin{equation}\label{E:spflow-A}
	\spf(-\AA)\  = \ -\spf(\AA).
\end{equation}
\end{lemma}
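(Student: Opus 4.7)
\emph{Proof plan.} The strategy is to produce a spectral section for $-\AA$ from a given spectral section for $\AA$, and then match up the resulting boundary spaces so that Definition~\ref{D:sp flow} immediately yields the sign flip. If $\PP = \{P^s\}_{0\le s\le 1}$ is a spectral section for $\AA$ with constant $R$, I first observe that $\{I - P^s\}_{0\le s\le 1}$ is a spectral section for $-\AA$ with the same constant $R$: eigensections of $-\AAA^s$ with eigenvalue $\mu$ are exactly eigensections of $\AAA^s$ with eigenvalue $-\mu$, so the two defining conditions of Definition~\ref{D:spectral section} interchange when $P^s$ is replaced by $I - P^s$.

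Write $B^s := \ker P^s$, $B_0^s := H^{1/2}_{(-\infty,0)}(\AAA^s)$ for the spaces appearing in Definition~\ref{D:sp flow} applied to $\AA$, and set $\tilde B^s := \ker(I-P^s) = \IM P^s$, $\tilde B_0^s := H^{1/2}_{(-\infty,0)}(-\AAA^s) = H^{1/2}_{(0,\infty)}(\AAA^s)$ for the analogous spaces attached to $-\AA$. Since $\AAA^0$ and $\AAA^1$ are assumed invertible, at each endpoint $s \in \{0,1\}$ the kernel of $\AAA^s$ is trivial and we obtain two direct sum decompositions of the same Sobolev space,
\[
	H^{1/2}_{\AAA^s}(N,E_N) \ = \ B^s \oplus \tilde B^s \ = \ B_0^s \oplus \tilde B_0^s,
\]
the first because $P^s$ is a projection, the second from the spectral decomposition of $\AAA^s$. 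I would then prove the following algebraic statement: whenever $H = X \oplus Y = X' \oplus Y'$ and both $X, X'$ and $Y, Y'$ are finite rank perturbations of each other in the sense of Definition~\ref{D:rel index}, we have $[X,X'] + [Y,Y'] = 0$. This reduces to a direct dimension count: writing $X = (X\cap X') \oplus X_1$, $X' = (X\cap X') \oplus X_1'$, and similarly $Y = (Y \cap Y') \oplus Y_1$, $Y' = (Y \cap Y') \oplus Y_1'$ with $X_1, X_1', Y_1, Y_1'$ finite dimensional, the two decompositions of $H$ force $\dim X_1 + \dim Y_1 = \dim X_1' + \dim Y_1'$, which rearranges to the claim.

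Applying this identity at $s = 0$ and $s = 1$, where the finite rank perturbation hypothesis holds because $B^s$ and $B_0^s$ agree away from the finite dimensional spectral subspace $H^{1/2}_{[-R,R]}(\AAA^s)$ and similarly for $\tilde B^s$ and $\tilde B_0^s$, we deduce $[\tilde B^s, \tilde B_0^s] = -[B^s, B_0^s]$, and hence
\[
	\spf(-\AA) \ = \ [\tilde B^1,\tilde B_0^1] - [\tilde B^0,\tilde B_0^0]
	\ = \ -[B^1,B_0^1] + [B^0,B_0^0] \ = \ -\spf(\AA).
\]
The main obstacle is really just the algebraic bookkeeping with relative indices and the careful handling of the middle spectral subspace; conceptually nothing deep happens, since flipping the sign of the operator simply reverses every eigenvalue crossing that contributes to the spectral flow.
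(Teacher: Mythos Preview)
Your argument is correct. The paper's own proof consists of a single citation to Lemma~5.7 of \cite{BrShi17}, which is exactly the complementary relative-index identity $[X,X']+[Y,Y']=0$ that you establish directly; so your proof simply unpacks that reference, following essentially the same route.
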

\begin{proof}
The lemma is an immediate consequence of Lemma~5.7 of \cite{BrShi17}.
\end{proof}

%---------------------------------------------
\subsection{Deformation of the relative $\eta$-invariant}\label{SS:def eta}

Let $\AA= \{\AAA^s\}_{0\le s\le 1}$ be a smooth almost constant family of self-adjoint strongly Callias-type operators on a complete odd-dimensional Riemannian manifold $N_1$. Let $\AAA_0$ be another self-adjoint strongly Callias-type operator, which is cobordant to $\AAA^0$. In Section~\ref{SS:familyAA} we showed that  $\AAA^0$ is cobordant to $\AAA^s$ for all $s\in [0,1]$. Hence, by Lemma~\ref{L:antisymmetry}.(ii), $\AAA_0$ is cobordant to $\AAA^1$. In this situation we say the $\AAA_0$ is {\em cobordant to the family} $\AA$.

The following theorem is the main result of this  section.

%-----------------------------------------
\begin{theorem}\label{T:sp flow}
Suppose 
\(\AA  =  \big\{
	  \AAA^s:\,C^\infty(N_1,E_1)\to C^\infty(N_1,E_1)\big\}_{0\le s\le 1}  
\)
is a smooth almost constant family of self-adjoint strongly Callias-type operators on a complete odd-dimensional Riemannian manifold $N_1$. Assume that $\AAA^0$ and $\AAA^1$ are invertible. Let $\AAA_0:C^\infty(N_0,E_0)\to C^\infty(N_0,E_0)$ be an invertible self-adjoint strongly Callias-type operator on a complete Riemannian manifold $N_0$ which is cobordant to the family $\AA$. Then the   mod $\ZZ$ reduction $\oeta(\AAA^s,\AAA_0)\in \RR/\ZZ$  of the relative $\eta$-invariant depends smoothly on $s\in[0,1]$ and 
\begin{equation}\label{E:sp flow}
	\eta(\AAA^1,\AAA_0)\ - \ \eta(\AAA^0,\AAA_0)
	\ - \ \int_0^1\,\big(\frac{d}{ds}\oeta(\AAA^s,\AAA_0)\big)\,ds
	\ = \ 2\,\spf(\AA).
\end{equation}
\end{theorem}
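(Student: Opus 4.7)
The plan is to use the functorial properties of the relative $\eta$-invariant from Section~\ref{S:releta} to eliminate $\AAA_0$ from \eqref{E:sp flow}, reinterpret the resulting identity via the interpolating cobordism on $M=[0,1]\times N_1$, and then identify the residual index with $\spf(\AA)$ using a spectral section. The cocycle property (Proposition~\ref{P:cocycle}) gives $\eta(\AAA^s,\AAA_0)=\eta(\AAA^s,\AAA^0)+\eta(\AAA^0,\AAA_0)$, in which the second summand is $s$-independent. Combined with \eqref{E:etaAA}, this yields $\eta(\AAA^1,\AAA_0)-\eta(\AAA^0,\AAA_0)=\eta(\AAA^1,\AAA^0)$ and $\tfrac{d}{ds}\oeta(\AAA^s,\AAA_0)=\tfrac{d}{ds}\oeta(\AAA^s,\AAA^0)$, so the theorem reduces to
\[
\eta(\AAA^1,\AAA^0)\;-\;\int_0^1\tfrac{d}{ds}\oeta(\AAA^s,\AAA^0)\,ds\;=\;2\,\spf(\AA).
\]

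Next, I would use the operator $\DD$ of \eqref{E:interpolationD} as an almost compact cobordism between $\AAA^0$ and $\AAA^1$ on $M$, and its restriction $\DD^s$ to $[0,s]\times N_1$ as a cobordism between $\AAA^0$ and $\AAA^s$. Since $\AA$ is almost constant, $\dot\AAA^s$ is compactly supported in a fixed $K\subset N_1$, and the classical APS transgression argument \cite{APS3}, applied after restriction to an almost compact essential support (Theorem~\ref{T:indM=indM1}), produces a compactly supported transgression form $\tau_s$ on $N_1$ with $\tfrac{d}{ds}\oeta(\AAA^s,\AAA^0)=\int_{N_1}\tau_s$, establishing smoothness of $\oeta(\AAA^s,\AAA^0)$. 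A local computation on the product cylinder gives $\int_M\AS(\DD^+)=\tfrac12\int_0^1\int_{N_1}\tau_s\,ds$, hence $\int_0^1\tfrac{d}{ds}\oeta(\AAA^s,\AAA^0)\,ds=2\int_M\AS(\DD^+)$. Combining this with Definition~\ref{D:releta} and the invertibility hypotheses $\dim\ker\AAA^0=\dim\ker\AAA^1=0$, the reduced identity becomes the purely index-theoretic statement
\[
\ind\DD^+_{B_0\oplus B_1}\;-\;2\int_M\AS(\DD^+)\;=\;\spf(\AA).
\]

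To establish this final equality, I would fix a spectral section $\PP=\{P^s\}$ for $\AA$ and set $B^s:=\ker P^s$; each $B^s$ is a finite-rank perturbation of the APS condition $B_0^s$ in the sense of Section~\ref{SS:gen bc}. Proposition~\ref{P:change of B} applied at both boundary components of $M$ rewrites $\ind\DD^+_{B_0\oplus B_1}$ as the index of $\DD^+$ with spectral-section boundary conditions plus two relative indices, which recombine, by Definition~\ref{D:sp flow} (following Dai--Zhang \cite{DaiZhang98}), into $\spf(\AA)$. The index of $\DD^+$ equipped with spectral-section boundary conditions at both ends equals $2\int_M\AS(\DD^+)$, as follows by extending $\PP$ smoothly along $[0,1]$ (as in Melrose--Piazza \cite{MelrosePiazza97}) to a family of boundary projections and applying the compact-case APS index theorem on an almost compact essential support, where the spectral-section contributions at the two endpoints cancel by construction.

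The main obstacle is this last index computation: in our non-compact setting one must verify that the spectral-section boundary conditions lie within the elliptic/Fredholm framework of \cite{BrShi17}, and then carry the compact APS analysis through the almost compact essential support reduction (Theorem~\ref{T:indM=indM1}), using the vanishing theorem \cite[Corollary 5.13]{BrShi17} to ensure that the cylindrical ends contribute trivially and that the local index density restricted to the cylinder matches the transgression identity used in the previous step.
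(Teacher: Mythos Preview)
Your reduction via the cocycle identity to the case $\AAA_0=\AAA^0$ is a clean simplification not in the paper, and the overall strategy of using a spectral section to absorb the boundary contribution is in the same spirit as the paper's argument. However, there is a genuine error in the final step.

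Your claim that the index of $\DD^+$ with spectral-section boundary conditions at both ends equals $2\int_M\AS(\DD^+)$ cannot be correct: the left-hand side is an integer, while $\int_M\AS(\DD^+)$ is a real number with no reason to lie in $\tfrac12\ZZ$. The error originates upstream, in the transgression identity. The defining formula \eqref{E:releta} gives the mod~$\ZZ$ reduction $\oeta(\AAA^r,\AAA^0)\equiv -2\int_{M'}\AS(\DD^r)$, so differentiating and integrating yields $\int_0^1\tfrac{d}{ds}\oeta\,ds=-2\int_M\AS(\DD^+)$, not $+2\int_M\AS(\DD^+)$. With the corrected sign the reduced identity becomes simply $\ind\DD^+_{B_0\oplus B_1}=\spf(\AA)$, and after the change of boundary conditions via Proposition~\ref{P:change of B} you need the index with spectral-section conditions at both ends to be \emph{zero}, not $2\int_M\AS$. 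Your justification for this (``extend $\PP$ smoothly and the endpoint contributions cancel'') is where the actual work lies, and it is not carried out.

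The paper avoids computing that index directly. Instead it constructs a one-parameter family $\DD^r$ of cobordisms on a \emph{fixed} manifold $M'$ between $\AAA_0$ and $\AAA^r$, and proves (Lemma~\ref{L:indDr}) by a straightforward continuity argument that $\ind\DD^r_{B_0\oplus B^r}$ is constant in $r$. This constancy, combined with Proposition~\ref{P:change of B} at $r=0$ and $r=1$, produces the difference $[B^1,B_0^1]-[B^0,B_0^0]=\spf$ immediately. If you want to salvage your approach, the missing ingredient is precisely an analogue of Lemma~\ref{L:indDr} on the cylinder, and the cleanest way to obtain it is the same homotopy argument the paper uses.
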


The proof of this theorem occupies Sections~\ref{SS:DDr}--\ref{SS:prsp flow}.

%-------------------------------------------------------
\subsection{A family of almost compact cobordisms}\label{SS:DDr}

Let $M$ be an essentially cylindrical manifold whose boundary is the disjoint union of $N_0$ and $N_1$. First, we construct a smooth family $\DD^r$ $(0\le r\le1$) of graded self-adjoint strongly Callias-type operators on the manifold
\begin{equation}\label{E:M'}
	M'\ := \ M\cup_{N_1} \big([0,1]\times N_1\,\big), 
\end{equation}
such that for each $r\in [0,1]$ the pair $(M',\DD^r)$ is an almost compact cobordism between $\AAA_0$ and $\AAA^r$.

Let $\DD:C^\infty(M,E)\to C^\infty(M,E)$ be an almost compact cobordism between $\AAA_0$ and $\AAA^0$. Let $E_0$ and $E_1$ denote the restrictions of $E$ to $N_0$ and $N_1$ respectively. 

Let $M'$ be given by \eqref{E:M'} and let $E'\to M'$ be the bundle over $M'$ whose restriction to $M$ is equal to $E$ and whose restriction to the cylinder $[0,1]\times N_1$ is equal to $[0,1]\times E_1$. 

We fix a smooth function $\rho:[0,1]\times [0,1]\to [0,1]$ such that for each $r\in [0,1]$
\begin{itemize}
\item  the function $s\mapsto \rho(r,s)$ is non-decreasing.
\item
$\rho(r,s)=0$ for $s\le1/3$ and $\rho(r,s)= r$ for $s\ge2/3$.
\end{itemize}
Consider the family of strongly Callias-type operators $\DD^r:C^\infty(M',E')\to C^\infty(M',E')$ whose restriction to $M$ is equal to $\DD$ and whose restriction to $[0,1]\times N_1$ is given by 
\[
	\DD^r\ : = \ c(dt)\,\left(\, \p_t+
		\begin{pmatrix}
		\AAA^{\rho(r,t)}&0\\0&-\AAA^{\rho(r,t)}
		\end{pmatrix}\,\right).
\]
Then $\DD^r$ is an almost compact cobordism between $\AAA_0$ and $\AAA^r$. In particular, the restriction of $\DD^r$ to $N_1$ is equal to $-\AAA^r$. 

Recall that we denote by $-\AA$ the family $\{-\AAA^s\}_{0\le s\le 1}$.
Let $\PP= \{P^s\}$ be a spectral section for $-\AA$. Then for each $r\in [0,1]$ the space  $B^r:= \ker P^r$ is a finite rank perturbation of the APS boundary condition  for $\DD^r$ at $\{1\}\times N_1$. Let $B_0:= H^{1/2}_{(-\infty,0)}(\AAA_0)$ be the APS boundary condition for $\DD^r$ at $N_0$. Then, by Proposition~\ref{P:change of B}, the operator  $\DD^r_{B_0\oplus B^r}$ is Fredholm. Recall that the domain $\dom \DD^r_{B_0\oplus B^r}$ consists of sections $u$ whose restriction to $\pM'=N_0\sqcup N_1$ lies in $B_0\oplus B^r$.

%-------------------------------------
\begin{lemma}\label{L:indDr}
$\ind \DD^r_{B_0\oplus B^r}= \ind \DD^1_{B_0\oplus B^1}$ for all $r\in [0,1]$. 
\end{lemma}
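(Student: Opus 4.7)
Since the function $r\mapsto \ind\DD^r_{B_0\oplus B^r}$ takes values in $\ZZ$ and $[0,1]$ is connected, it suffices to show that this function is locally constant, which I will establish via the stability of the Fredholm index under suitable deformations. Concretely, I will argue that the family $\{\DD^r_{B_0\oplus B^r}\}_{r\in[0,1]}$ varies continuously in $r$ in a topology on closed Fredholm operators (such as the gap topology on graphs) in which the index is known to be locally constant.

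For the operators themselves, the construction of $\DD^r$ in Section~\ref{SS:DDr} shows that $\DD^r$ coincides with $\DD$ on $M$ and, on $[0,1]\times N_1$, is built from $\AAA^{\rho(r,t)}$ as in \eqref{E:interpolationD}. Smoothness of $\rho$ and of $s\mapsto\AAA^s$ therefore make $r\mapsto\DD^r$ into a smooth family of differential operators; because $\AA$ is almost constant (so $\AAA^s$ is independent of $s$ outside some compact $K\subset N_1$), the difference $\DD^r-\DD^{r_0}$ is a zeroth-order bundle map supported in the compact set $[1/3,2/3]\times K\subset M'$, whose supremum norm is $O(|r-r_0|)$. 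Thus $\DD^r$ depends norm-continuously on $r$ as an unbounded operator with maximal domain.

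For the boundary conditions, $B_0$ is fixed, and continuity of $r\mapsto P^r$ in operator norm is built into the definition of a spectral section. Consequently $B^r=\ker P^r$ varies continuously as a closed subspace of $L^2(N_1,E_1)$ in the gap topology. Because the operators $\AAA^s$ coincide outside the compact set $K$, the norms $\|\cdot\|_{H^{1/2}_{-\AAA^r}}$ are uniformly equivalent in $r$ on any compact subinterval of $[0,1]$, so the continuity lifts to the $H^{1/2}$ level. Applying Proposition~\ref{P:change of B} we may further compare $B^r$ with the APS boundary condition $H^{1/2}_{(-\infty,0)}(-\AAA^r)$, confirming that each $\DD^r_{B_0\oplus B^r}$ is Fredholm and realizing it as a finite-rank perturbation of an APS problem whose relative index depends continuously on $r$.

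Combining the two continuity statements yields a continuous family of closed Fredholm operators in the gap topology, and the stability of the Fredholm index gives that $r\mapsto \ind\DD^r_{B_0\oplus B^r}$ is locally constant, hence constant on $[0,1]$, as required. The main obstacle is the simultaneous continuity of both the operator $\DD^r$ and the varying domain (because $B^r\subset H^{1/2}_{-\AAA^r}$ lives in a Sobolev space that itself depends on $r$); the almost constant assumption on $\AA$ is exactly what secures the uniform equivalence of the boundary Sobolev structures needed to invoke the standard index-stability results.
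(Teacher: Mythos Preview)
Your strategy is the same as the paper's: show $r\mapsto\ind\DD^r_{B_0\oplus B^r}$ is locally constant by exhibiting the family as a continuous family of Fredholm operators. The difference is in implementation. You argue at the level of the gap topology on closed operators, assembling continuity of $r\mapsto\DD^r$ (as a bounded perturbation with compact support) and continuity of $r\mapsto B^r$ (from the spectral section), and then asserting that these combine to give graph continuity of the family with varying domains. The paper instead makes this combination step explicit and elementary: for $r$ near $r_0$ the orthogonal projection $\pi_{r_0r}:B^{r_0}\to B^r$ is an isomorphism, which induces an isomorphism $\Pi_{r_0r}:\dom\DD^{r_0}_{B_0\oplus B^{r_0}}\to\dom\DD^{r}_{B_0\oplus B^{r}}$; then $\DD^r_{B_0\oplus B^r}\circ\Pi_{r_0r}$ is a norm-continuous family of bounded operators on a \emph{fixed} domain, so its index is locally constant and equals $\ind\DD^r_{B_0\oplus B^r}$.

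Your sketch is correct in outline, but the step ``combining the two continuity statements yields a continuous family in the gap topology'' is exactly the place where one has to do real work, and the most direct way to justify it is the paper's domain-isomorphism trick. Your appeal to Proposition~\ref{P:change of B} and the continuity of the relative index $[B^r,B_0^r]$ is a detour: that integer is locally constant, but this alone does not control $\ind\DD^r_{B_0\oplus B_0^r}$, which is the other term you would need. The paper's argument avoids this by never separating the two contributions.
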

\begin{proof}%[Proof of Lemma~\ref{L:indDr}]
For $r_0,r\in [0,1]$, let $\pi_{r_0r}:\,B^{r_0} \to B^r$
denote the orthogonal projection. Then for every $r_0\in[0,1]$ there exists $\varepsilon>0$ such that if $|r-r_0|<\varepsilon$ then $\pi_{r_0r}$ is an isomorphism. As in the proofs of \cite[Theorem 5.11]{BrShi17} and \cite[Theorem 8.12]{BaerBallmann12}, it induces an isomorphism
\[
	\Pi_{r_0r}\,:\,\dom\DD^{r_0}_{B_0\oplus B^{r_0}} \ \to \ 
		\dom\DD^{r}_{B_0\oplus B^{r}}.
\]
Hence
\begin{equation}\label{E:DcircPi}
		\ind \big(\DD^{r}_{B_0\oplus B^{r}}\circ \Pi_{r_0r}\big) 
		\ = \ \ind \DD^{r}_{B_0\oplus B^{r}}.
\end{equation}
Since for $|r-r_0|<\varepsilon$
\[
	\DD^{r}_{B_0\oplus B^{r}}\circ \Pi_{r_0r}:\, \dom\DD^{r_0}_{B_0\oplus B^{r_0}}
	\ \to \ L^2(M',E')
\]
is a continuous family of bounded operators, $\ind \DD^{r}_{B_0\oplus B^{r}}\circ \Pi_{r_0r}$ is independent of $r$. The lemma follows now from \eqref{E:DcircPi}.
\end{proof}

%---------------------------------------------
\subsection{Variation of the reduced relative $\eta$-invariant}\label{SS:reducedeta}	

By Definition~\ref{D:releta}, the mod $\ZZ$ reduction of the relative $\eta$-invariant is given by 
\begin{equation}\label{E:redeta formula}
	\oeta(\AAA^r,\AAA_0)\ := \ -2\,\int_{M'}\, \AS(\DD^r).
\end{equation} 
It follows that $\oeta(\AAA^r,\AAA_0)$ depends smoothly on $r$ and 
\begin{equation}\label{E:ddr oeta}
	\frac{d}{dr}\oeta(\AAA^r,\AAA_0)\ = \ -2\,\int_{M'}\, \frac{d}{dr} \AS(\DD^r).
\end{equation}
A more explicit local expression for the right hand side of this equation is given in Section~\ref{SS:ddtoeta}. For the moment we just note that \eqref{E:ddr oeta} implies that
\begin{equation}\label{E:intddr oeta}
	\int_0^1\,\big(\frac{d}{ds}\oeta(\AAA^s,\AAA_0)\big)\,ds \ = \ 
	-2\,\int_{M'}\, \Big(\AS(\DD^1)- \AS(\DD^0)\,\Big).
\end{equation}

%---------------------------------------------
\subsection{Proof of Theorem~\ref{T:sp flow}}\label{SS:prsp flow}
 Since the operators $\AAA_0, \AAA^0$, and $\AAA^1$ are invertible, we have 
\begin{equation}\label{E:releta01}\notag
	\eta(\AAA^j,\AAA_0)= 2\,\left(\, \ind \DD^{j}_{B_0\oplus B^j_0}\  - \ 
	\int_{M'}\,\AS(\DD^j)
	\,\right), \qquad j=0,1.
\end{equation}
Thus, using \eqref{E:intddr oeta}, we obtain
\begin{equation}\label{E:dif eta}
	\eta(\AAA^1,\AAA_0)- \eta(\AAA^0,\AAA_0)\ - \ 
	\int_0^1\,\big(\frac{d}{ds}\oeta(\AAA^s,\AAA_0)\,\big)\, ds 
 	\ = \ 
 	2\,\left(\, \ind \DD^{1}_{B_0\oplus B^1_0}\ - \ 
 	 \ind \DD^{0}_{B_0\oplus B^0_0}\,\right).
\end{equation}

Recall that, by Proposition~\ref{P:change of B},
\begin{equation}\label{E:B0Brind}\notag
	\ind \DD^{r}_{B_0\oplus B^r} \  = \ \ind \DD^{r}_{B_0\oplus B^r_0} 
	  \ + \ [B^r,B^r_0],
\end{equation}
where $B^r= \ker P^r$ and $B^r_0= H^{1/2}_{(-\infty,0)}(\AAA^r)$ are defined in Subsection \ref{SS:sp flow}.
Hence, from \eqref{E:dif eta} we obtain
\begin{multline}\notag
	\frac12\left(\,
	\eta(\AAA^1,\AAA_0)- \eta(\AAA^0,\AAA_0)\ - \ 
	\int_0^1\,\big(\frac{d}{ds}\oeta(\AAA^s,\AAA_0)\,\big)\, ds \,\right)
	\\ = \ 
	\left(\,\ind \DD^1_{B_0\oplus B^1}- [B^1,B^1_0]\,\right) \ -\ 
	\left(\, \ind \DD^0_{B_0\oplus B^0}- [B^0,B^0_0]\,\right)
	\\ \overset{\text{Lemma~\ref{L:indDr} }}{=} \
	-[B^1,B^1_0]\ + \ [B^0,B^0_0] \ = \ -\spf(-\AA) 
	\ \overset{\text{Lemma~\ref{L:spflow-A}}}= \ \spf(\AA).
\end{multline}
\hfill$\square$

%---------------------------------------------
\subsection{A local formula for variation of the reduced relative $\eta$-invariant}\label{SS:ddtoeta}
It is well known that there exists a family of  differential forms $\beta_r$ ($0\le r\le 1$), called the {\em transgression form} such that 
\begin{equation}\label{E:transgression}
	d\beta_r\ = \ \frac{d}{dr} \AS(\DD^r).
\end{equation}
The transgression form depends on the symbol of $\DD^r$ and its derivatives with respect to $r$. For geometric Dirac operators one can write very explicit formulas for $\beta_r$. For example, if $\DD^r$ is the signature operator (so that $\AAA^r$ is the odd signature operator) corresponding to a family $\n^r$ of flat connections on $E$,  then $\beta_r= L(M)\wedge \frac{d}{dr}\n^r$, where $L(M)$ is the  $L$-genus of $M$, cf, for example, \cite[Theorem~2.3]{BrKappelerRAT}. For general Dirac-type operators, a formula for $\beta_r$ is more complicated, cf. \cite[\S6]{BrMaschler17}.

We note that since the family $\AAA^r$ is constant outside of the compact set $K$, the form $\beta_r$ vanishes outside of $K$. Hence, $\int_{\pM'}\beta_r$ is well defined and finite. Thus we obtain from \eqref{E:ddr oeta} that 
\begin{equation}\label{E:ddr local}
	\frac{d}{dr}\oeta(\AAA^r,\AAA_0) \ = \ -2\int_{M'} d\beta_r\ = \ 
	-2\int_{\pM'}\,\beta_r \ = \
	2\;\Big(\int_{\{1\}\times N_1}\beta_r \ - \ \int_{N_0}\,\beta_r\Big).
\end{equation}
Hence, \eqref{E:sp flow} expresses $\eta(\AAA^1,\AAA_0)- \eta(\AAA^0,\AAA_0)$ as a sum of $2\spf(\AA)$ and a local differential geometric expression $2\int_{\pM'}(\int_0^1\beta_r)\,dr$.

%%%%%%%%%%%%%%%%%%
%---------------------------------------------------
\bibliographystyle{amsplain}
% \bib, bibdiv, biblist are defined by the amsrefs package.
\begin{bibdiv}
\begin{biblist}

\bib{Anghel90}{article}{
      author={Anghel, N.},
       title={{$L^2$}-index formulae for perturbed {D}irac operators},
        date={1990},
        ISSN={0010-3616},
     journal={Comm. Math. Phys.},
      volume={128},
      number={1},
       pages={77\ndash 97},
         url={http://projecteuclid.org/euclid.cmp/1104180304},
}

\bib{Anghel93Callias}{article}{
      author={Anghel, N.},
       title={On the index of {C}allias-type operators},
        date={1993},
        ISSN={1016-443X},
     journal={Geom. Funct. Anal.},
      volume={3},
      number={5},
       pages={431\ndash 438},
         url={http://dx.doi.org/10.1007/BF01896237},
}

\bib{AtSinger84}{article}{
      author={Atiyah, M.~F.},
      author={Singer, I.~M.},
       title={Dirac operators coupled to vector potentials},
        date={1984},
        ISSN={0027-8424},
     journal={Proc. Nat. Acad. Sci. U.S.A.},
      volume={81},
      number={8, , Phys. Sci.},
       pages={2597\ndash 2600},
         url={https://doi-org.ezproxy.neu.edu/10.1073/pnas.81.8.2597},
      review={\MR{742394}},
}

\bib{APS1}{article}{
      author={Atiyah, M.~F.},
      author={Patodi, V.~K.},
      author={Singer, I.~M.},
       title={Spectral asymmetry and {R}iemannian geometry. {I}},
        date={1975},
     journal={Math. Proc. Cambridge Philos. Soc.},
      volume={77},
      number={1},
       pages={43\ndash 69},
}

\bib{APS3}{article}{
      author={Atiyah, M.~F.},
      author={Patodi, V.~K.},
      author={Singer, I.~M.},
       title={Spectral asymmetry and {R}iemannian geometry. {III}},
        date={1976},
     journal={Math. Proc. Cambridge Philos. Soc.},
      volume={79},
      number={1},
       pages={71\ndash 99},
}

\bib{BaerBallmann12}{incollection}{
      author={B\"ar, C.},
      author={Ballmann, W.},
       title={Boundary value problems for elliptic differential operators of
  first order},
        date={2012},
   booktitle={Surveys in differential geometry. {V}ol. {XVII}},
      series={Surv. Differ. Geom.},
      volume={17},
   publisher={Int. Press, Boston, MA},
       pages={1\ndash 78},
         url={http://dx.doi.org/10.4310/SDG.2012.v17.n1.a1},
}

\bibitem{BarStrohmaier15}
C.~{B\"ar} and A.~{Strohmaier}.
\newblock {An index theorem for Lorentzian manifolds with compact spacelike
  Cauchy boundary}.
\newblock {\em ArXiv e-prints: 1506.00959}, June 2015.

\bibitem{BarStrohmaier16}
C.~B\"ar and A.~Strohmaier.
\newblock A rigorous geometric derivation of the chiral anomaly in curved
  backgrounds.
\newblock {\em Comm. Math. Phys.}, 347(3):703--721, 2016.

\bib{BeGeVe}{book}{
      author={Berline, N.},
      author={Getzler, E.},
      author={Vergne, M.},
       title={Heat kernels and {Dirac} operators},
   publisher={Springer-Verlag},
        date={1992},
}

\bib{Bertlmann01}{book}{
      author={Bertlmann, R.~A.},
       title={Anomalies in quantum field theory (international series of
  monographs on physics)},
   publisher={Clarendon Press},
        date={2001},
        ISBN={0198507623},
  url={https://www.amazon.com/Anomalies-Quantum-International-Monographs-Physics/dp/0198507623?SubscriptionId=0JYN1NVW651KCA56C102&tag=techkie-20&linkCode=xm2&camp=2025&creative=165953&creativeASIN=0198507623},
}

\bib{BottSeeley78}{article}{
      author={Bott, R.},
      author={Seeley, R.},
       title={Some remarks on the paper of {C}allias: ``{A}xial anomalies and
  index theorems on open spaces'' [{C}omm. {M}ath. {P}hys. {\bf 62} (1978), no.
  3, 213--234;\ {MR} 80h:58045a]},
        date={1978},
        ISSN={0010-3616},
     journal={Comm. Math. Phys.},
      volume={62},
      number={3},
       pages={235\ndash 245},
         url={http://projecteuclid.org/euclid.cmp/1103904396},
}

\bibitem{Br-Lorentz}
M.~{Braverman}.
\newblock {An index of strongly Callias operators on Lorentzian manifolds with non-compact boundary}.
\newblock {\em ArXiv e-prints: 1807.03851}, July 2018.

\bib{BrCecchini17}{article}{
      author={Braverman, M.},
      author={Cecchini, S.},
       title={Callias-type operators in von {N}eumann algebras},
        date={2018},
        ISSN={1559-002X},
     journal={The Journal of Geometric Analysis},
      volume={28},
      number={1},
       pages={546\ndash 586},
         url={http://dx.doi.org/10.1007/s12220-017-9832-1},
}

\bib{BrKappelerRAT}{article}{
      author={Braverman, M.},
      author={Kappeler, T.},
       title={{Refined Analytic Torsion}},
        date={2008},
     journal={J. Differential Geom.},
      volume={78},
      number={1},
       pages={193\ndash 267},
}

\bib{BrMaschler17}{article}{
      author={Braverman, M.},
      author={Maschler, G.},
       title={Equivariant {A}{P}{S} index for {D}irac operators of non-product
  type near the boundary},
     journal={arXiv preprint arXiv:1702.08105, to appear in Indiana University
  Mathematics Journal},
         url={https://arxiv.org/abs/1702.08105},
}

\bib{BrShi16}{article}{
      author={Braverman, M.},
      author={Shi, P.},
       title={Cobordism invariance of the index of {C}allias-type operators},
        date={2016},
        ISSN={0360-5302},
     journal={Comm. Partial Differential Equations},
      volume={41},
      number={8},
       pages={1183\ndash 1203},
         url={http://dx.doi.org/10.1080/03605302.2016.1183214},
      review={\MR{3532391}},
}

\bib{BrShi17}{article}{
      author={Braverman, M.},
      author={Shi, P.},
       title={The {A}tiyah--{P}atodi--{S}inger index on manifolds with
  non-compact boundary},
        date={201706},
     journal={arXiv preprint arXiv:1706.06737},
         url={https://arxiv.org/abs/1706.06737},
}

\bib{BrShi18}{article}{
      author={Braverman, M.},
      author={Shi, P.},
       title={The index of a local boundary value problem for a strongly Callias-type operator},
     journal={Preprint, \texttt{	arXiv:1810.06134}},
}

\bib{BruningMoscovici}{article}{
      author={Br{\"u}ning, J.},
      author={Moscovici, H.},
       title={{$L^2$}-index for certain {D}irac-{S}chr\"odinger operators},
        date={1992},
        ISSN={0012-7094},
     journal={Duke Math. J.},
      volume={66},
      number={2},
       pages={311\ndash 336},
         url={http://dx.doi.org/10.1215/S0012-7094-92-06609-9},
      review={\MR{1162192 (93g:58142)}},
}

\bib{Bunke93}{inproceedings}{
      author={Bunke, U.},
       title={Comparison of {D}irac operators on manifolds with boundary},
        date={1993},
   booktitle={Proceedings of the {W}inter {S}chool ``{G}eometry and {P}hysics''
  ({S}rn\'\i , 1991)},
       pages={133\ndash 141},
}

\bib{Bunke95}{article}{
      author={Bunke, U.},
       title={A {$K$}-theoretic relative index theorem and {C}allias-type
  {D}irac operators},
        date={1995},
        ISSN={0025-5831},
     journal={Math. Ann.},
      volume={303},
      number={2},
       pages={241\ndash 279},
         url={http://dx.doi.org/10.1007/BF01460989},
      review={\MR{1348799 (96e:58148)}},
}

\bib{Callias78}{article}{
      author={Callias, C.},
       title={Axial anomalies and index theorems on open spaces},
        date={1978},
        ISSN={0010-3616},
     journal={Comm. Math. Phys.},
      volume={62},
      number={3},
       pages={213\ndash 235},
         url={http://projecteuclid.org/euclid.cmp/1103904395},
}

\bib{CarvalhoNistor14}{article}{
      author={Carvalho, C.},
      author={Nistor, V.},
       title={An index formula for perturbed {D}irac operators on {L}ie
  manifolds},
    language={English},
        date={2014},
        ISSN={1050-6926},
     journal={The Journal of Geometric Analysis},
      volume={24},
      number={4},
       pages={1808\ndash 1843},
         url={http://dx.doi.org/10.1007/s12220-013-9396-7},
}

\bib{DaiZhang98}{article}{
      author={Dai, X.},
      author={Zhang, W.},
       title={Higher spectral flow},
        date={1998},
        ISSN={0022-1236},
     journal={J. Funct. Anal.},
      volume={157},
      number={2},
       pages={432\ndash 469},
         url={http://dx.doi.org/10.1006/jfan.1998.3273},
      review={\MR{1638328}},
}

\bib{Donnelly87}{article}{
      author={Donnelly, H.},
       title={Essential spectrum and heat kernel},
        date={1987},
        ISSN={0022-1236},
     journal={J. Funct. Anal.},
      volume={75},
      number={2},
       pages={362\ndash 381},
         url={http://dx.doi.org/10.1016/0022-1236(87)90101-7},
      review={\MR{916757}},
}

\bib{FoxHaskell03}{article}{
      author={Fox, J.},
      author={Haskell, P.},
       title={Heat kernels for perturbed {D}irac operators on even-dimensional
  manifolds with bounded geometry},
        date={2003},
        ISSN={0129-167X},
     journal={Internat. J. Math.},
      volume={14},
      number={1},
       pages={69\ndash 104},
         url={http://dx.doi.org/10.1142/S0129167X03001648},
      review={\MR{1955511}},
}

\bib{FoxHaskell05}{article}{
      author={Fox, J.},
      author={Haskell, P.},
       title={The {A}tiyah--{P}atodi--{S}inger theorem for perturbed {D}irac
  operators on even-dimensional manifolds with bounded geometry},
        date={2005},
        ISSN={1076-9803},
     journal={New York J. Math.},
      volume={11},
       pages={303\ndash 332},
         url={http://nyjm.albany.edu:8000/j/2005/11_303.html},
      review={\MR{2154358}},
}

\bib{Freed86}{article}{
      author={Freed, D.},
       title={Determinants, torsion, and strings},
        date={1986},
        ISSN={0010-3616},
     journal={Comm. Math. Phys.},
      volume={107},
      number={3},
       pages={483\ndash 513},
         url={http://projecteuclid.org.ezproxy.neu.edu/euclid.cmp/1104116145},
      review={\MR{866202}},
}

\bib{HoravaWitten96}{article}{
      author={Ho\v{r}ava, P.},
      author={Witten, E.},
       title={Heterotic and type {I} string dynamics from eleven dimensions},
        date={1996},
        ISSN={0550-3213},
     journal={Nuclear Phys. B},
      volume={460},
      number={3},
       pages={506\ndash 524},
         url={https://doi-org.ezproxy.neu.edu/10.1016/0550-3213(95)00621-4},
      review={\MR{1381609}},
}

\bib{Kato95book}{book}{
      author={Kato, T.},
       title={Perturbation theory for linear operators},
      series={Classics in Mathematics},
   publisher={Springer-Verlag, Berlin},
        date={1995},
        ISBN={3-540-58661-X},
        note={Reprint of the 1980 edition},
      review={\MR{1335452}},
}

\bib{Kottke11}{article}{
      author={Kottke, C.},
       title={An index theorem of {C}allias type for pseudodifferential
  operators},
        date={2011},
        ISSN={1865-2433},
     journal={J. K-Theory},
      volume={8},
      number={3},
       pages={387\ndash 417},
         url={http://dx.doi.org/10.1017/is010011014jkt132},
      review={\MR{2863418}},
}

\bib{Kottke15}{article}{
      author={Kottke, C.},
       title={A {C}allias-type index theorem with degenerate potentials},
        date={2015},
        ISSN={0360-5302},
     journal={Comm. Partial Differential Equations},
      volume={40},
      number={2},
       pages={219\ndash 264},
         url={http://dx.doi.org/10.1080/03605302.2014.942740},
      review={\MR{3277926}},
}

\bib{LawMic89}{book}{
      author={Lawson, H.~B.},
      author={Michelsohn, M.-L.},
       title={Spin geometry},
   publisher={Princeton University Press},
     address={Princeton, New Jersey},
        date={1989},
}

\bib{MelrosePiazza97}{article}{
      author={Melrose, R.~B.},
      author={Piazza, P.},
       title={Families of {D}irac operators, boundaries and the
  {$b$}-calculus},
        date={1997},
        ISSN={0022-040X},
     journal={J. Differential Geom.},
      volume={46},
      number={1},
       pages={99\ndash 180},
         url={http://projecteuclid.org/euclid.jdg/1214459899},
      review={\MR{1472895}},
}

\bib{Muller98}{article}{
      author={M\"uller, W.},
       title={Relative zeta functions, relative determinants and scattering
  theory},
        date={1998},
        ISSN={0010-3616},
     journal={Comm. Math. Phys.},
      volume={192},
      number={2},
       pages={309\ndash 347},
         url={http://dx.doi.org/10.1007/s002200050301},
}

\bib{Nicolaescu95}{article}{
      author={Nicolaescu, L.~I.},
       title={The {M}aslov index, the spectral flow, and decompositions of
  manifolds},
        date={1995},
        ISSN={0012-7094},
     journal={Duke Math. J.},
      volume={80},
      number={2},
       pages={485\ndash 533},
         url={http://dx.doi.org/10.1215/S0012-7094-95-08018-1},
      review={\MR{1369400}},
}

\bib{Shi17}{article}{
      author={Shi, P.},
       title={The index of {C}allias-type operators with {A}tiyah--{P}atodi--{S}inger boundary conditions},
        date={2017},
     journal={Ann. Glob. Anal. Geom.},
      volume={52},
      number={4},
       pages={465\ndash 482},
	     url={http://dx.doi.org/10.1007/s10455-017-9575-z},
      review={\MR{3735908}},
}

\bib{Wimmer14}{article}{
      author={Wimmer, R.},
       title={An index for confined monopoles},
        date={2014},
        ISSN={0010-3616},
     journal={Comm. Math. Phys.},
      volume={327},
      number={1},
       pages={117\ndash 149},
         url={http://dx.doi.org/10.1007/s00220-014-1934-z},
      review={\MR{3177934}},
}

\bib{Witten85GravAnom}{article}{
      author={Witten, E.},
       title={Global gravitational anomalies},
        date={1985},
        ISSN={0010-3616},
     journal={Comm. Math. Phys.},
      volume={100},
      number={2},
       pages={197\ndash 229},
         url={http://projecteuclid.org.ezproxy.neu.edu/euclid.cmp/1103943444},
}

\end{biblist}
\end{bibdiv}

\end{document}